\newtheorem{thm}{Theorem}[section]
\newtheorem{lema}[thm]{Lemma}
\newtheorem{prop}[thm]{Proposition}
\theoremstyle{definition}
\newtheorem{defn}[thm]{Definition}
\theoremstyle{remark}
\newtheorem{rem}[thm]{Remark}
\numberwithin{equation}{section}
\newcommand{\R}{\mathbb R}
\newcommand{\N}{\mathbb N}
\newcommand{\s}{\mathbf{s}}
\newcommand{\p}{\mathbf{p}}
\newcommand{\q}{\mathbf{q}}
\newcommand{\rr}{\mathbf{r}}
\newcommand{\Q}{\mathcal{Q}}
\newcommand{\F}{\mathcal{F}}
\newcommand{\LL}{\mathcal{L}}
\newcommand{\ve}{\varepsilon}
\def\supp{\mathop{\text{\normalfont supp}}}
\def\diver{\mathop{\text{\normalfont div}}}
\begin{document}
\title[Anisotropic eigenvalues]{Existence of Eigenvalues for Anisotropic and Fractional Anisotropic Problems via Ljusternik-Schnirelmann Theory}
	
\author[I. Ceresa Dussel and J. Fern\'andez Bonder]{Ignacio Ceresa Dussel and Juli\'an Fern\'andez Bonder}
	
\address{Instituto de C\'alculo, CONICET\\
Departamento de Matem\'atica, FCEN - Universidad de Buenos Aires\\
Ciudad Universitaria, 0+$\infty$ building, C1428EGA, Av. Cantilo s/n\\
Buenos Aires, Argentina}
	
\begin{abstract}
In this work, our interest lies in proving the existence of critical values of the following Rayleigh-type quotients
$$
\Q_{\p}(u) = \frac{\|\nabla u\|_{\p}}{\|u\|_{\p}},\quad\text{and}\quad \Q_{\s,\p}(u) = \frac{[u]_{\s,\p}}{\|u\|_{\p}},
$$
where $\p = (p_1,\dots,p_n)$, $\s=(s_1,\dots,s_n)$ and 
$$
\|\nabla u\|_{\p} = \sum_{i=1}^n \|u_{x_i}\|_{p_i}
$$
is an anisotropic Sobolev norm, $[u]_{\s,\p}$ is a fractional version of the same anisotropic norm, and
$$
\|u\|_{\p} =\left(\int_{\R}\left(\dots \left(\int_{\R}|u|^{p_1}dx_1\right)^{\frac{p_2}{p_1}}\,dx_2\dots \right)^{p_n/p_{n-1}}dx_n\right)^{1/p_n}
$$
is an anisotropic Lebesgue norm.
	
Using the Ljusternik-Schnirelmann theory, we prove the existence of a sequence of critical values and we also find an associated Euler-Lagrange equation for critical points.  Additionally, we analyze the connection between the fractional critical values and  its local counterparts.
\end{abstract}
	
\subjclass[2020]{35J60, 35R11,35P,49R05}
	
	
\keywords{Eigenvalues, mixed Lebesgue, anisotropic Sobolev spaces, Ljusternik-Schnirelmann}

\maketitle

\section{Introduction}

Eigenvalue problems are a well-established and widely studied subject that spans across various fields, including analysis and partial differential equations (PDEs). In the context of PDEs, the Laplacian eigenvalue problem involves finding the eigenvalues and corresponding eigenfunctions that satisfy the equation
$$
\Delta u+\lambda u=0.
$$

Solving this problem provides valuable insights into the behavior of functions within a given domain. The eigenvalues offer information about the Laplacian's spectrum, while the eigenfunctions reveal spatial patterns associated with different frequencies or modes. 



Another well-known eigenvalues problem arises with the $p$-Laplacian, a nonlinear generalization of the Laplacian defined by
$$
\Delta_p u=\diver(|\nabla u|^{p-2}\nabla u).
$$
The eigenvalue problem of the $p$-Laplacian, characterized by the nonlinearity introduced through power exponentiation, is both intriguing and demanding, as it entails solving the equation
$$
-\Delta_p u=\lambda |u|^{p-2}u
$$  
Many authors have extensively explored this problem, as seen in \cite{BONDERROSSI02,LEAN,LINDQVIST13,BELLONI04}.

Even more, in recent decades, there has been a growing interest in fractional operators due to their applications in various natural sciences models \cite{CORDOBA04,GONZALVES19,RIASCO15}. One prominent exponent of this family of fractional operators is the fractional $p$-Laplacian, defined as
$$
(-\Delta_p)^s u(x) = \text{p.v.} (1-s)K_{n,p}\int_{\R^n} \frac{|u(x)-u(y)|^{p-2}(u(x)-u(y))}{|x-y|^{n+sp}}\, dy,
$$ 
where $K_{n,p}$ is a constant that depends only on $n$ and $p$.

The eigenvalue problem associated with this fractional operator,
$$
(-\Delta_p)^s u(x)=\lambda |u|^{p-2}u,
$$
has also been studied extensively by several authors \cite{CAFA07,DINEZZA12,VALDINOCI13,FRANZINA14}. 
 
 The aim of this paper is to introduce an anisotropic feature to these eigenvalue problems. This choice is motivated by the substantial attention dedicated to investigating this phenomenon in signal processing and diffusion studies \cite{CHERNYSHOV2018,TSIOTSIOS2013}. For those not familiar with the term, anisotropy can be described as the characteristic of displaying directional dependence, where various attributes or qualities manifest differently in distinct directions. This stands in opposition to the isotropic nature of the Laplacian, $p-$Laplacian, and fractional $p-$Laplacian, where these properties remain uniform regardless of the direction.

On one hand, a strategy to address anisotropy involves emphasizing the integrability of individual partial derivatives of a function $u$ by employing the sum of standard $L^p$ norms,
$$
\|\nabla u\|_{\p}=\sum_{i=1}^{n}\left\|u_{x_i}\right\|_{p_i},
$$
 see \cite{RAKOSNIK79,RAKOSNIK81,TROISI69}. Hence, we naturally arrive at the following anisotropic pseudo-laplace operator
$$
-\widetilde{\Delta}_\p u := -\diver\left(\sum_{i=1}^{n}|u_{x_i}|^{p_i-2}u_{x_i}\right)
$$
On the other hand, Benedek $\&$ Panzone  \cite{BENEDEK61} present the anisotropic $L^{\p}$ ($\p=(p_1,\dots,p_n)$) space with a special norm  to address the anisotropy of a function $u$. The mixed Lebesgue space is constructed by considering different exponents for each coordinate in the norm
$$
\|u\|_{\p}=\left(\int_{\R}\left(\dots \left(\int_{\R}|u|^{p_1}dx_1\right)^{\frac{p_2}{p_1}}\,dx_2\dots \right)^{p_n/p_{n-1}}dx_n\right)^{1/p_n}.
$$
By considering different exponents for each coordinate, the mixed Lebesgue norm accounts for the anisotropy of the function $u$. It allows for a more flexible and nuanced characterization of the integrability and decay properties across different coordinates.

By combining this two perspective we can state the following eigenvalue problem 
$$
-\widetilde{\Delta}_\p u =\lambda \F_{\p}(u),
$$
where $\F_{\p}$ is a suitable functional related to $\|u\|_{\p}$. See \eqref{F_p}.

Unfortunately, this problem is hindered by its lack of homogeneity. It's important to observe that if $v$ is an eigenfunction associated with $\lambda$, there's a possibility that $tv$ may not qualify as an eigenfunction of $\lambda$.

Note that a crucial approach to solving the Laplacian, $p-$Laplacian and fractional $p-$Laplacian eigenvalue problems involves finding the critical points of the Rayleigh quotient associated with each one, namely
$$\Q_2(u) = \frac{\|\nabla u\|^2_{2}}{\|u\|^2_{2}},\quad\Q_p(u) = \frac{\|\nabla u\|_p^p}{\|u\|_p^p}\quad\text{and }\Q_{sp}(u) = \frac{[u]_{sp}^p}{\|u\|_p^p}. $$  

Therefore, it is recommended to explore the following homogeneous Rayleigh quotient
\begin{equation}\label{Qp}
\Q_{\p}(u)=\frac{\|\nabla u\|_{\p}}{\|u\|_{\p}}.
\end{equation}
As we will observe in Section \ref{section3}, the associated Euler-Lagrange equation of $\Q_{\p}(u)$ is the following homogeneous eigenvalue problem,
\begin{equation}\label{eigenvalue}
-\LL_{\p}u=-\diver\left(\sum_{i=1}^{n}\left|\frac{u_{x_i}}{\|u_{x_i}\|_{p_i}}\right|^{p_i-2}\frac{u_{x_1}}{\|u\|_{p_i}}\right)=\lambda \F_{\p}(u).
\end{equation}

\bigskip

In \cite{CHAKER23} fractional anisotropy is introduced through the utilization of integrability parameters $\p=(p_1,\ldots,p_n)$, $1<p_i<\infty$, and fractional parameters $\s=(s_1,\ldots,s_n)$, $0<s_i<1$, and the subsequent norm,

$$
[u]_{\s,\p}=\sum_{i=1}^{n}\left( \int_{\R^n}\int_{\R} (1-s_i) \frac{|u(x+he_i)-u(x)|^{p_i}}{|h|^{1+s_ip_i}}\,dh \,dx\right)^{1/p_i}.
$$

As in the non-fractional case, combining this perspective with the  Benedek $\&$ Panzone's norm we arrive to the following eigenvalue problem
$$
(-\widetilde{\Delta}_{\p})^{\s} u(x)=\lambda \F_{\p}(u),
$$
where $(-\widetilde{\Delta}_{\p})^{\s}$ it the \textit{fractional pseudo p-Laplacian} operator defined as
$$
(-\widetilde{\Delta}_{\p})^{\s} u(x) =\sum_{i=1}^{n} \int_{\R^n}\frac{(1-s_i)}{p_i} \frac{|u(x+he_i)-u(x)|^{p_i-2}(u(x+he_i)-u(x))}{|h|^{1+sp}}\,dh\,dx .
$$
Again, this is not an homogeneous problem, therefore we study the
 homogeneous Rayleight quotient 
\begin{equation}\label{Qsp}
	\Q_{\s,\p}(u)=\frac{[u]_{\s,\p}}{\|u\|_{\p}}.
\end{equation}
As we will see in Section \ref{section3} the Euler-Lagrange equation is
\begin{equation}\label{eigenvalue_s}
	-\LL_{\s. \p}u=\lambda \F_{\p}(u),
\end{equation}
where $\LL_{\s,\p}$ is the fractional version of $\LL_{\p}$.

To address the problem of find criticals points of \eqref{Qp}, and \eqref{Qsp} and solve the eigenvalues problems \eqref{eigenvalue} and \eqref{eigenvalue_s}, the Ljusternik-Schnirelman theory serves as a powerful framework for exploring critical point theory and the existence of critical points for functionals as we will see in Section \ref{section5}. See \cite{MOTREANU14}.

The rest of the paper is organized as follows: In Section \ref{section2}, we dive into anisotropic Sobolev spaces and fractional anisotropic Sobolev spaces, explaining them in more detail and discussing some interesting properties like the Poincaré inequality and a Rellich-Kondrashov type theorem. Then, in Section \ref{section3}, we figure out the Euler-Lagrange equations associated with the corresponding Rayleight-type quotients. In Section \ref{section4} we study the asymptotic behavior of the sequence of eigenvalues as $\s\to(1,\dots,1)$ and finally in Section \ref{section5} we use Ljusternik-Schnirelman theory to prove the existence of eigenvalues.

\section{Mixed, anisotropic and fractional  spaces}\label{section2}
In this section, our objective is to establish the definition of the mixed Lebesgue space, as introduced by \cite{BENEDEK61}. This space will serve as a fundamental building block for our analysis.
Furthermore, we will define a suitable anisotropic Sobolev space, $W_0^{1,\p}(\Omega)$, and a fractional anistropic Sobolev space $W_0^{\s,\p}(\Omega)$.

\subsection{Mixed space}

Let $\p=(p_1,p_2,\dots,p_n)$ with $1<p_i<\infty$ for $i=1,\dots,n$ be integral parameters. Without loss of generality, we can assume that 
\begin{equation}\label{condicion}
1<p_1\leq p_2 \leq \dots\leq p_n<\infty.
\end{equation}  
We define the \textit{mixed Lebesgue space} as
 $$
 L^{\p}(\R^n)=\{u\text{ measurable such that } \|u\|_{L^{\p}(\R^n)}<\infty\}.$$ Where 
$$
\|u\|_{\p}=\left(\int_{\R}\left(\dots \left(\int_{\R}|u|^{p_1}dx_1\right)^{\frac{p_2}{p_1}}\,dx_2\dots \right)^{p_n/p_{n-1}}dx_n\right)^{1/p_n}.
$$
Furthermore, given $\Omega$ an open bounded subset of $\R^n$, we define 
$$
 L^{\p}(\Omega)=\{u\in L^{\p}(\R^n)\text{ such that }u=0 \text{ in } \R^n\setminus\Omega\}. 
$$
Observe that $L^{\p}(\Omega)$ is a closed subspace of $L^{\p}(\R^n)$. This space $L^{\p}(\Omega)$ turns out to be a reflexive Banach space and its properties were studied in \cite{BENEDEK61,ADAMS88}.
\begin{rem}\label{rem.recurrencia}
The $\|.\|_{\p}$ norm can be defined by recurrence as
\begin{align*}
I_1(u)&=\left(\int_{\R}|u|^{p_1}\,dx_1\right)^{1/p_1}\\
I_2(u)&=\left(\int_{\R}I_1(u)^{p_2}\,dx_2\right)^{1/p_2}\\
&\vdots\\
I_j(u)&=\left(\int_{\R}I_{j-1}(u)^{p_j}\,dx_j\right)^{1/p_j}\\
&\vdots\\
I_n(u)&=\left(\int_{\R}I_{n-1}(u)^{p_n}\,dx_n\right)^{1/p_n}\\
I(u)&= I_n(u) =\|u\|_{\p}
\end{align*}

\end{rem}

\begin{rem}\label{rem.norma parcial}
Observe that, given  $u\in L^{\p}(\R^n)$, $I_j(u)$ is a function of $(x_{j+1},\dots,x_n)$. 

Moreover, for almost every $(y_{j+2},\dots,y_n)\in \R^{n-j-2}$, the function $I_j(u)$ (as a function of $x_{j+1}$), belongs to $L^{p_{j+1}}(\R)$.
	
Also, observe that if $\{u_k\}_{k\in \N}\subset L^{\p}(\R^n)$ is such that $u_k\to u \in L^{\p}(\R^n)$ as $k\to\infty$ then $I_j(u_k)(\cdot, y_{j+2},\dots,y_n)\to I_j(u)(\cdot, y_{j+2},\dots,y_n)$ in $L^{p_{j+1}}(\R)$ for a.e. $(y_{j+2},\dots,y_n)\in \R^{n-j-2}$.
\end{rem} 

\subsection{Anisotropic Sobolev spaces}

Our interest lies in functions whose partial derivatives have different integrability. With this fact in mind, given $\p = (p_1,\dots,p_n)$ with $1<p_i<\infty$, the anisotropic Sobolev space is defined as follows:
$$
W^{1,\p}(\R^n):=\left\{u\in L^{\p}(\R^n) \colon  u_{x_i} \in L^{p_i}(\R^n),\ i=1,\dots,n\right\},
$$
 equipped with the following norm
$$
\| u\|_{1,\p} = \|u\|_{\p} + \sum_{i=1}^{n}\left\|u_{x_i}\right\|_{p_i} = \|u\|_{\p} + \|\nabla u\|_{\p}.
$$
It is easy to prove that $W^{1,\p}(\R^n)$ is a separable, reflexive Banach space. 

Now, given a bounded domain $\Omega\subset \R^n$ we define $W^{1,\p}_0(\Omega)$ as the closure of $C_c^\infty(\Omega)$ in $W^{1,\p}(\R^n)$.

\subsection{Fractional space}
Next we present the fractional anisotropic Sobolev space.

First, given $i=1,\dots, n$,  $s\in(0,1]$ and $p\in (1,\infty)$, for any $u\colon \R^n\to\R$ measurable we define the quantity
$$
[u]_{s,p,i} = 
\left(\int_{\R^n}\int_\R\frac{|u(x+he_i)-u(x)|^p}{|h|^{1+sp}}\, dh dx\right)^{\frac{1}{p}},
$$
where $e_i$ is the $i^\text{th}-$canonical vector base in $\R^n$.

Now, given $\p=(p_1,\dots,p_n)$ and $\s=(s_1,\dots,s_n)$ with $1<p_i<\infty$ and $0<s_i<1$, for $i=1,\dots,n$, we define the anisotropic fractional order Sobolev space as
$$
W^{\s,\p}(\R^n):=\left\{u\in L^{\p}(\R^n)\colon [u]_{s_i,p_i,i}<\infty,\ i=1,\dots,n\right\}.
$$
This space has a natural norm defined as
$$
\|u\|_{\s,\p} := \|u\|_{\p} + \sum_{i=1}^n [u]_{s_i,p_i,i} = \|u\|_{\p} + [u]_{\s,\p}.
$$
It is easy to see that $W^{\s,\p}(\R^n)$ is a separable and reflexive Banach space. See \cite{CERESABONDER23, CHAKER23}.

As before, given $\Omega\subset \R^n$ a bounded domain, we define $W^{\s,\p}_0(\Omega)$ as the closure of $C^\infty_c(\Omega)$ in $W^{\s,\p}(\R^n)$.

The following two theorems represent analogs to the classical Poincaré inequality and the Rellich-Kondrashov type theorem within the context of $L^{\p}(\Omega)$ and anisotropic fractional Sobolev space.

\begin{prop}[Poincaré]
	
Given $\Omega$ an open bounded subset on $\R^n$, there exists constants $C_1(\Omega,\p,n)>0$ and $C_2(\Omega,\p,\s,n)>0$ such that for every $u$ in $W^{1,\p}_0(\Omega)$, the following inequality holds
\begin{equation}\label{Poincare1}
\|u\|_{\p} \leq C_1\|\nabla u\|_{\p},
\end{equation}
and for every $u$ in $W^{\s,\p}_0(\Omega)$, the following inequality holds:
\begin{equation}\label{Poincare2}
\|u\|_{\p} \leq C_2 [u]_{\s,\p}.
\end{equation}
\end{prop}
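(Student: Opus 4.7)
The plan is to reduce both Poincar\'e inequalities to their classical one-dimensional analogues in the $x_n$-direction (the direction associated with the largest exponent $p_n$), via an auxiliary embedding that controls the mixed anisotropic norm $\|\cdot\|_{\p}$ by the standard $L^{p_n}(\R^n)$ norm on functions with bounded support.

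The key auxiliary step I would establish first is: for any measurable $g$ supported in the cube $[-R,R]^n \supset \Omega$,
$$
\|g\|_{\p}\le C(R,\p)\,\|g\|_{L^{p_n}(\R^n)}.
$$
Its proof uses the iterative definition of $\|\cdot\|_{\p}$ from Remark~\ref{rem.recurrencia}: at stage $j$, since $p_j\le p_n$ and the support of $g$ in the variable $x_j$ lies in $[-R,R]$, the one-dimensional H\"older inequality on the slice yields $\|h\|_{L^{p_j}_{x_j}}\le(2R)^{1/p_j-1/p_n}\|h\|_{L^{p_n}_{x_j}}$. Iterating this from the innermost integral $I_1$ outward and closing with Fubini at the end produces the claimed comparison. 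This is the main technical step and, precisely because it relies on the monotonicity assumption \eqref{condicion}, I expect it to be the only real obstacle in the argument.

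Since every $u\in W^{1,\p}_0(\Omega)$ (and analogously $u\in W^{\s,\p}_0(\Omega)$) vanishes outside $\Omega\subset[-R,R]^n$, the auxiliary embedding applies. For \eqref{Poincare1}, I would then apply the classical one-dimensional Poincar\'e inequality on each fiber in the $x_n$-direction: $\|u(x',\cdot)\|_{L^{p_n}(\R)}\le 2R\,\|u_{x_n}(x',\cdot)\|_{L^{p_n}(\R)}$ for a.e.\ $x'=(x_1,\dots,x_{n-1})$. Raising to the $p_n$-th power and integrating in $x'$ yields $\|u\|_{L^{p_n}(\R^n)}\le 2R\,\|u_{x_n}\|_{L^{p_n}(\R^n)}$. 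Composing with the auxiliary embedding and using $\|u_{x_n}\|_{p_n}\le\|\nabla u\|_{\p}$ gives \eqref{Poincare1}.

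For \eqref{Poincare2}, the scheme is identical, replacing the classical one-dimensional Poincar\'e by its fractional counterpart: since $u(x',\cdot)$ vanishes outside $[-R,R]$, the standard estimate comparing $|u(x',x_n)|^{p_n}$ with the tail contribution to the Gagliardo double integral in $x_n$ (where $u(x',y_n)=0$) gives $\|u(x',\cdot)\|_{L^{p_n}(\R)}\le C(R,s_n,p_n)\,[u(x',\cdot)]^{1D}_{s_n,p_n}$. Integrating the $p_n$-th power in $x'$ and using Fubini yields $\|u\|_{L^{p_n}(\R^n)}\le C\,[u]_{s_n,p_n,n}\le C\,[u]_{\s,\p}$, and the auxiliary embedding concludes \eqref{Poincare2}.
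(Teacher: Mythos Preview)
Your proposal is correct and follows essentially the same route as the paper. Both arguments rest on the two ingredients you isolate: the H\"older-based embedding $L^{p_n}(\Omega)\hookrightarrow L^{\p}(\Omega)$ for compactly supported functions (which the paper states in one line and you spell out via the iteration of Remark~\ref{rem.recurrencia}), and a one-dimensional Poincar\'e estimate in the $x_n$-direction---classical for \eqref{Poincare1}, and the ``tail'' bound $\int_{|h|>2R}|h|^{-1-s_np_n}\,dh$ for \eqref{Poincare2}. The only cosmetic difference is that for \eqref{Poincare2} the paper proves $[u]_{s_i,p_i,i}\gtrsim\|u\|_{p_i}$ for \emph{every} $i$ and then sums before discarding all but the $p_n$ term, whereas you go straight to $i=n$; your version is slightly more economical but not a different idea.
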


\begin{proof}
Let $u$ be a function in $W^{1,\p}_0(\Omega)$. On one hand, observe that since $p_i\le p_n$ for every $i=1,\dots,n-1$ and $|\Omega|<\infty$, it follows by H\"older's inequality that $L^{p_n}(\Omega)$ is continuously embedded in $L^{\p}(\Omega)$, that is, there exists a positive constant $C>0$ such that
$$
\|u\|_{\p}\leq C\|u\|_{L^{p_n}} .
$$
On the other hand, the Poincaré inequality for functions in $W^{1,p_n}_0(\Omega)$
$$
\|u\|_{L^{p_n}(\Omega)}\leq C \|u_{x_n}\|_{L^{p_n}(\Omega)}\leq C \sum_{i=1}^{n}\|u_{x_i}\|_{L^{p_i}(\Omega)}
$$
Therefore, by combining these results, we obtain \eqref{Poincare1}.
	
For the second inequality, let $u$ be a function in $W^{\s,\p}_0(\Omega)$, we can assume that there exist $R>0$ such that $\supp{u}\subset Q_R = [-R,R]^n$. Hence,
\begin{align*}
[u]_{s_1,p_1,1}^{p_1}&=\int_{\R^n}\int_\R\frac{|u(x+he_1)-u(x)|^{p_1}}{|h|^{1+s_1p_1}}\,dh\,dx\\
&\geq \int_{Q_R}\int_\R\frac{|u(x+he_1)-u(x)|^{p_1}}{|h|^{1+s_1p_1}}\,dh\,dx\\
&\geq \int_{Q_R'}\int_{|x_1|\leq R}\int_{|x_1+he_1|\geq R}\frac{|u(x)|^{p_1}}{|h|^{1+s_1p_1}}\,dh\,dx_1\,dx'\\
&\geq \int_{Q_R'}\int_{|x_1|\leq R}|u(x)|^{p_1}\int_{|h|\geq 2R}\frac{1}{|h|^{1+s_1p_1}}\,dh\,dx_1\,dx'\\
&\geq C\|u\|_{p_1}^{p_1},
\end{align*}
where $Q_R' = [-R,R]^{n-1}$ and $dx' = dx_2\cdots dx_n$.

Arguing in a similar fashion we conclude that there exists $C_i(\Omega, s_i, p_i)$ such that
$$
C_i[u]_{s_i,p_i,i}\geq \|u\|_{p_i}.
$$
Therefore taking $K=\max_{i}\{C_i\}$ we have that 
$$
K\sum_{i=1}^n [u]_{s,p,i}\geq \sum_{i=1}^n \|u\|_{p_i}\geq \|u\|_{p_n}\geq  C \|u\|_{\p}.
$$

This fact concludes the proof of \eqref{Poincare2}.
\end{proof}

The following notation will be used. Given a vector $\q = (q_1,\dots,q_n)$ with $q_i>0$ for $i=1,\dots,n$, we denote by $\bar\q$ the {\em harmonic mean} of the vector $\q$, i.e. 
$$
\bar\q := \left(\frac{1}{n}\sum_{i=1}^n\frac{1}{q_i}\right)^{-1}.
$$

Next, given two vectors $\q = (q_1,\dots,q_n)$ and $\rr =  (r_1,\dots, r_n)$ with $q_i, r_i>0$ for $i=1,\dots,n$ we define the {\em product} $\q\rr$ as
$$
\q\rr = (q_1 r_1,\dots,q_n r_n),
$$
the coordinate by coordinate multiplication.

\begin{prop}[Rellich-Kondrashov]
Let $\p=(p_1,\dots,p_n)$ with $1<p_i<\infty$, $i=1,\dots,n$ and be such that
\begin{equation}\label{condicion1}
\bar\p<n.
\end{equation}
Define the {\em critical exponent} $\p^*$ as	
\begin{equation}\label{p*}
\p^*:=\frac{n\bar \p}{n-\bar \p}.
\end{equation} 	
Then $W^{1,\p}_0(\Omega)\subset L^{q}(\Omega)$, for all $1\le q\leq \p^*$. Even more $W^{1,\p}_0(\Omega) \subset\subset L^{q}(\Omega)$ if $1\le q<\p^*$. In particular $W^{1,\p}_0(\Omega) \subset\subset L^{\p}(\Omega)$.
	
Now, let $\s = (s_1,\dots,s_n)$ with $0<s_i<1$, for $i=1,\dots,n$ and $\p$ be as before. Assume that 
\begin{equation}\label{condicion2}
\overline{\s\p} < n
\end{equation} 
and define the {\em fractional critical exponent}
\begin{equation}\label{ps*}
\p^*_{\s} =\frac{n\frac{\overline{\s\p}}{\bar \s}}{n-\overline{\s\p}}.
\end{equation}
Moreover, assume that 
\begin{equation}\label{condicion3}
p_n < \p^*_{\s}.
\end{equation} 
Then $W^{\s,\p}_0(\Omega) \subset L^{q}(\Omega)$, for all $1\le q\leq \p^*_{\s}$. Even more, $W^{\s,\p}_0(\Omega) \subset\subset L^{q}(\Omega)$ for $1\le q<\p^*_{\s}$. In particular	$W^{\s,\p}_0(\Omega) \subset\subset L^{\p}(\Omega)$.
\end{prop}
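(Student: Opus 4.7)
The plan is to reduce both continuous embeddings to Sobolev-type inequalities already available in the literature, and then to obtain the compact embeddings via the Fréchet--Kolmogorov criterion combined with Hölder interpolation. For the local case, the key input is the classical anisotropic Sobolev inequality of Troisi \cite{TROISI69}, which, under the assumption $\bar\p<n$, states that
$$
\|u\|_{L^{\p^*}(\R^n)} \le C\prod_{i=1}^n \|u_{x_i}\|_{L^{p_i}(\R^n)}^{1/n}
$$
for every $u\in C_c^\infty(\R^n)$. By AM--GM this is dominated by $C\|\nabla u\|_{\p}$, so by density $W^{1,\p}_0(\Omega)\hookrightarrow L^{\p^*}(\Omega)$, and Hölder on the bounded set $\Omega$ upgrades this to a continuous embedding into every $L^q(\Omega)$ with $1\le q\le\p^*$.

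For the local compactness, I would apply Fréchet--Kolmogorov to a bounded sequence $\{u_k\}\subset W^{1,\p}_0(\Omega)$ extended by zero to $\R^n$. The elementary translation bound
$$
\|u_k(\cdot+h)-u_k\|_{L^1(\R^n)} \le \sum_{i=1}^n |h_i|\,\|u_{k,x_i}\|_{L^1(\Omega)} \le C|h|\,\|\nabla u_k\|_{\p}
$$
(first inequality from the fundamental theorem of calculus on a smooth approximation, second from Hölder and $|\Omega|<\infty$) is uniform in $k$, and combined with the uniformly bounded support it gives precompactness in $L^1(\Omega)$. Interpolation
$$
\|u-v\|_{L^q} \le \|u-v\|_{L^1}^\theta \|u-v\|_{L^{\p^*}}^{1-\theta}
$$
with the continuous $L^{\p^*}$ bound then yields compactness in $L^q(\Omega)$ for every $1\le q<\p^*$.

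For the fractional case, the continuous embedding $W^{\s,\p}(\R^n)\hookrightarrow L^{\p^*_{\s}}(\R^n)$ under $\overline{\s\p}<n$ is precisely the fractional anisotropic Sobolev inequality of Chaker--Kim--Weidner \cite{CHAKER23}, which I would invoke as a black box. Here is where condition \eqref{condicion3} enters: by \eqref{condicion} we have $p_i\le p_n<\p^*_{\s}$ for every $i$, hence $L^{\p^*_{\s}}(\Omega)\hookrightarrow L^{p_i}(\Omega)$ by Hölder on the bounded domain, and iterating as in the proof of \eqref{Poincare1} controls the mixed norm $\|\cdot\|_{\p}$ by $\|\cdot\|_{L^{\p^*_{\s}}}$. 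One further application of Hölder then gives the continuous embedding into $L^q(\Omega)$ for every $1\le q\le\p^*_{\s}$.

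The main obstacle is the fractional compactness. The plan is again Fréchet--Kolmogorov, but the translation estimate is delicate because
$$
\int_{|h|\le\delta}\frac{\|u(\cdot+he_i)-u\|_{L^{p_i}(\R^n)}^{p_i}}{|h|^{1+s_ip_i}}\,dh \le [u]_{s_i,p_i,i}^{p_i}
$$
only gives an \emph{averaged} and not a pointwise control on translates in the $e_i$ direction. I would overcome this through a mollification argument carried out direction by direction: convolve $u$ against a one-dimensional kernel $\rho_\varepsilon$ acting on the $i$-th variable, use Jensen together with the above averaged inequality to bound $\|u-u\ast\rho_\varepsilon\|_{L^{p_i}(\R^n)}$ uniformly in any bounded family of $W^{\s,\p}_0(\Omega)$, and apply the classical translation continuity to the mollified function, which is smooth in the $i$-th direction. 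Iterating over $i=1,\dots,n$ and using the uniformly bounded support produces uniform translation continuity in the mixed norm $\|\cdot\|_{\p}$, hence precompactness in $L^{\p}(\Omega)$. Interpolating once more with the continuous $L^{\p^*_{\s}}$ embedding concludes compactness in every $L^q(\Omega)$ with $1\le q<\p^*_{\s}$, and Hölder on $\Omega$ covers the lower exponents.
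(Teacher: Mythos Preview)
Your argument is correct in outline, but it takes a considerably different route from the paper's. The paper does essentially no work here: for the local case it simply cites \cite{TROISI69} and \cite{ELHAMIDI2007} for the continuous and compact embeddings $W^{1,\p}_0(\Omega)\subset\subset L^q(\Omega)$ for $1\le q<\p^*$, and for the fractional case it cites \cite[Theorem~2.1]{CHAKER23}, which already contains both the Sobolev inequality and the compactness statement for $1\le q<\p^*_\s$. The only additional observation in the paper is the passage to the mixed space $L^\p(\Omega)$, and this is done in one line: since $p_n<\p^*$ (resp.\ $p_n<\p^*_\s$ by \eqref{condicion3}) the standard embedding $L^{p_n}(\Omega)\hookrightarrow L^\p(\Omega)$ is continuous, so compactness into $L^{p_n}$ immediately yields compactness into $L^\p$. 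By contrast, you reprove the compact embeddings from first principles via Fr\'echet--Kolmogorov and interpolation. This is a perfectly valid and more self-contained approach, but it is more labor, and in the fractional case your direction-by-direction mollification together with a Fr\'echet--Kolmogorov criterion in the \emph{mixed} norm requires some care that you only sketch. The paper's shortcut---route everything through the scalar space $L^{p_n}(\Omega)$ and use $L^{p_n}\hookrightarrow L^\p$---avoids that issue entirely.
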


\begin{proof}
The proof of $W^{1,\p}_0(\Omega) \subset\subset L^{q}$ for all $1<q< \p^*$ is studied in the previous references \cite{TROISI69,ELHAMIDI2007}. To prove that $W^{1,\p}_0(\Omega) \subset\subset L^{\p}(\Omega)$ observe that as $p_n< \p^*$ then $L^{p_n}(\Omega)\subset L^{\p}(\Omega)$ continuously. 

The proof of fractional case is immediate of \cite[Theorem 2.1]{CHAKER23} and the previous idea.
\end{proof}

Without loss of generality, we can always assume that \eqref{condicion} is satisfied.

In the rest of the paper, it will always be assumed that conditions \eqref{condicion1}, \eqref{condicion2} and \eqref{condicion3} hold.

\section{The Euler-Lagrange equation}\label{section3}

\subsection{Non-fractional case}
In this subsection we will establish the Euler-Lagrange equation associated to the Rayleigh-type quotient $\Q_{\p}$ defined in \eqref{Qp}.
In fact, following ideas from \cite{LINDQVIST13} (see also \cite{Bonder-Salort-Vivas}), we show that the EL equation turns out to be the following
\begin{equation}\label{P}
\begin{cases}
-\LL_{\p} u:=\lambda \F_{\p} (u)\quad &\text{ in }\Omega\\
u=0&\text{ in } \R^n\setminus \Omega,
\end{cases}
\end{equation}
where 
\begin{equation}\label{L_p}
-\LL_{\p} u := -\diver\left(\sum_{i=1}^{n}\left|\frac{u_{x_i}}{\| u_{x_i}\|_{p_i}}\right|^{p_i-2}\frac{u_{x_i}}{\| u_{x_i}\|_{p_i}}\right)
\end{equation}
and 
\begin{equation}\label{F_p}
\F_{\p}(u)= \prod_{i=1}^{n} I_i(u)^{p_{i+1}-p_i} |u|^{p_1-2}u,
\end{equation}
where $p_{n+1}=1$.
 
\begin{defn}
Let $u$ be a function in $W^{1,\p}_0(\Omega)$,  then $u$ is a weak solution of \eqref{P} if and only if $u$ verifies  
$$
\int_{\Omega}\sum_{i=1}^{n}\left|\frac{u_{x_i}}{\|u_{x_i}\|_{p_i}}\right|^{p_i-2}\frac{u_{x_i}}{\|u_{x_i}\|_{p_i}}v_{x_i}\,dx=	\lambda\int_{\Omega}\F_{\p}(u) v\,dx,
$$
for all $v\in W^{1,\p}_0(\Omega).$
\end{defn}

We will need the following lemma regarding the behavior of the functional $\F_{\p}$.
\begin{lema}\label{Fp.bien.def}
Let $\p=(p_1,\dots,p_n)$ be such that $1<p_i<\infty$ and let $\p'=(p_1',\dots,p_n')$. Let $\F_{\p}$ be the functional defined in \eqref{F_p}. 

Then $\F_{\p}\colon L^{\p}(\R^n)\to L^{\p'}(\R^n)$ is continuous.
\end{lema}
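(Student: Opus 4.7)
The plan is to split the argument into two parts: boundedness (showing $\F_\p(u)\in L^{\p'}(\R^n)$ with a computable norm) and continuity. For boundedness, I would compute $\|\F_\p(u)\|_{\p'}$ explicitly by iterating the recursive definition of the mixed norm from Remark \ref{rem.recurrencia}. Using the identity $(p_1-1)p_1' = p_1$, integration of $|\F_\p(u)|^{p_1'}$ in $x_1$ produces a factor $I_1(u)^{p_1-1}$; since every $I_i(u)$ is independent of $x_1$, combining with the preexisting $I_1(u)^{p_2-p_1}$ gives a partial norm $I_1(u)^{p_2-1}\prod_{i=2}^n I_i(u)^{p_{i+1}-p_i}$. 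Inductively, at level $k$ the partial mixed-$\p'$ norm equals
\[
I_k(u)^{p_{k+1}-1}\prod_{i=k+1}^n I_i(u)^{p_{i+1}-p_i},
\]
each step using that $I_i$ is constant in $x_k$ for $i\geq k$ and that $(p_{k+1}-1)p_{k+1}'=p_{k+1}$. With the convention $p_{n+1}=1$, the final step $k=n$ yields $\|\F_\p(u)\|_{\p'} = I_n(u)^0 = 1$ whenever $u\neq 0$; in particular $\F_\p(u)\in L^{\p'}(\R^n)$.

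For continuity, fix $u\neq 0$ in $L^\p(\R^n)$ and let $u_k\to u$ there. By the subsequence principle it suffices to show that every subsequence admits a further subsequence $u_{k_j}$ along which $\F_\p(u_{k_j})\to \F_\p(u)$ in $L^{\p'}$. First I would pass to a subsequence with $u_{k_j}\to u$ pointwise a.e., and then apply Remark \ref{rem.norma parcial} together with an iterated subsequence extraction to also secure $I_i(u_{k_j})\to I_i(u)$ pointwise a.e.\ on $\R^{n-i}$ for each $i=1,\dots,n$. Continuity of $t\mapsto |t|^{p_1-2}t$ and of $r\mapsto r^{p_{i+1}-p_i}$ then forces $\F_\p(u_{k_j})\to \F_\p(u)$ pointwise a.e. To promote this to strong $L^{\p'}$-convergence, I would invoke that $L^{\p'}$ is reflexive and uniformly convex (each $p_i'\in(1,\infty)$): the uniform bound $\|\F_\p(u_{k_j})\|_{\p'}=1$ combined with pointwise a.e.\ convergence yields weak convergence to $\F_\p(u)$ (using reflexivity and Mazur's theorem to identify the weak limit with the a.e.\ limit), while the norm identity $\|\F_\p(u_{k_j})\|_{\p'}=1=\|\F_\p(u)\|_{\p'}$ upgrades weak to strong convergence via uniform convexity.

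The principal obstacle is the passage to pointwise a.e.\ convergence of the partial norms $I_i(u_k)$: Remark \ref{rem.norma parcial} only supplies $L^{p_{i+1}}(\R)$-convergence in the variable $x_{i+1}$ for a.e.\ frozen $(y_{i+2},\dots,y_n)$, so one must carefully coordinate iterated subsequence extractions across the different indices $i$ to globalize this into pointwise a.e.\ convergence on $\R^{n-i}$. A secondary delicate point is the degenerate case $u\equiv 0$: because $\F_\p$ is zero-homogeneous, $\F_\p(0)$ has no natural value, so the continuity assertion must be read as continuity on $L^{\p}(\R^n)\setminus\{0\}$ (or via a suitable convention extending $\F_\p$ at the origin).
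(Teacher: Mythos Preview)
Your well-definedness computation parallels the paper's induction, and your value $\|\F_\p(u)\|_{\p'}=1$ for $u\neq 0$ is in fact correct (the paper writes $\|u\|_\p$, which is a slip: $\F_\p=I'$ is $0$-homogeneous, so its dual norm cannot scale with $u$). For continuity you take a genuinely different route. The paper constructs, along a subsequence via \cite[Theorem~4.9]{BREZIS}, pointwise majorants $h_i\in L^{p_{i+1}}$ with $I_i(u_k)\le h_i$ and $h_0\in L^{p_1}$ with $|u_k|\le h_0$, bounds $|\F_\p(u_k)-\F_\p(u)|^{p_1'}$ by a fixed integrable $\Phi$, and then iterates dominated convergence through the nested quantities $\tilde I_j(k)$. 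Your argument instead exploits the $0$-homogeneity: since $\|\F_\p(u_{k_j})\|_{\p'}=1=\|\F_\p(u)\|_{\p'}$ for all $j$, weak convergence together with the uniform convexity of $L^{\p'}$ (Radon--Riesz) would upgrade to strong convergence. This is slicker and bypasses the majorant bookkeeping; the price is that you must cite the uniform convexity of mixed Lebesgue spaces (established by Benedek--Panzone).

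The one genuine gap is your justification of weak convergence. Mazur's theorem does not identify a weak limit with an a.e.\ limit: it only produces strongly convergent convex combinations of the $\F_\p(u_{k_j})$, but pointwise a.e.\ convergence is not preserved under passage to convex combinations, so nothing forces the Mazur limit to equal $\F_\p(u)$. What you actually need is the standard fact that in $L^q$ with $1<q<\infty$ (and in its mixed-norm analogues) a norm-bounded sequence converging a.e.\ converges weakly to its a.e.\ limit. One proves this by testing first against characteristic functions $\mathbf 1_A$ of finite-measure sets---H\"older gives $\int_E|\F_\p(u_{k_j})|\le \|\mathbf 1_E\|_\p$ uniformly in $j$, hence local uniform integrability, hence Vitali yields $\int_A \F_\p(u_{k_j})\to\int_A \F_\p(u)$---and then extending by density to all of $L^\p$. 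With that repair and the uniform convexity reference your argument goes through, and your remark on the $u=0$ degeneracy is well placed.
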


\begin{proof}
To see that it is well defined, just observe that if $u\in L^{\p}(\R^n)$, then
\begin{align*}
\left(\int_\R |\F_{\p}(u)|^{p_1'}\, dx_1\right)^{1/p_1'} &= \prod_{i=1}^n I_i(u)^{p_{i+1}-p_1}\left(\int_\R |u|^{(p_1-1)p_1'}\, dx_1\right)^{1/p_1'}\\
&=  \prod_{i=1}^n I_i(u)^{p_{i+1}-p_1} I_1(u)^{p_1/p_1'}\\
&=  \prod_{i=2}^n I_i(u)^{p_{i+1}-p_1} I_1(u)^{p_2-1}.
\end{align*}
Iterating this procedure, one easily conclude that
$$
\|\F_{\p}(u)\|_{\p'} = \|u\|_{\p}.
$$

In order to see the continuity of $\F_{\p}$, let $\{u_k\}_{k\in\N}\subset L^{\p}(\R^n)$ be such that $u_k\to u$ in $L^{\p}(\R^n)$. Then, define
\begin{align*}
& \tilde I_1(k) := \left(\int_\R |\F_{\p}(u_k)-\F_{\p}(u)|^{p_1'}\, dx_1\right)^{1/p_1'}\\
& \tilde I_{i+1}(k) := \left(\int_\R \tilde I_i(k)^{p_{i+1}'}\, dx_{i+1}\right)^{1/p_{i+1}'},\qquad i=1,\dots,n-1.
\end{align*}
Observe that $\|\F_{\p}(u_k) - \F_{\p}(u)\|_{\p'} = \tilde I_n(k)$, so it is enough to show that, up to a subsequence,
\begin{equation}\label{cotas.Itilde}
\begin{split}
& \tilde I_i(k) \to 0 \text{ as } k\to\infty\quad \text{a.e. } (x_{i+1},\dots,x_n), \qquad i=1,\dots,n\\
& \text{and}\\
& \text{a.e. } (x_{i+2},\dots, x_n),\ \tilde I_i(k)(x_{i+1}) \le h_i(x_{i+1}), \quad \text{with } h\in L^{p_{i+1}'}(\R).
\end{split}
\end{equation}

In fact, let us see \eqref{cotas.Itilde} for $i=1$ and the rest will follow by induction.

By Remark \ref{rem.norma parcial}, it is easy see that $\F_{\p}(u_k)\to \F_{\p}(u)$ a.e. So in order to see that $\tilde I_1(k)\to 0$ for a.e. $x'=(x_2,\dots,x_n)$ we need to find an integrable majorant for $|\F_{\p}(u_k) - \F_{\p}(u)|^{p_1'}$ for a.e. $x'\in \R^{n-1}$.

Hence,
\begin{align*}
|\F_{\p}(u_k) - \F_{\p}(u)|^{p_1'}\le C\left(\prod_{i=1}^n I_i(u_k)^{(p_{i+1}-p_i)p_1'} |u_k|^{p_1} + \prod_{i=1}^n I_i(u)^{(p_{i+1}-p_i)p_1'} |u|^{p_1} \right)
\end{align*}
As, by Remark \ref{rem.norma parcial} $I_i(u_k)(\cdot, x_{i+2},\dots,x_n)\to I_i(u)(\cdot, x_{i+2},\dots,x_n)$ in $L^{p_{i+i}}(\R)$ for a.e. $(x_{i+2},\dots,x_n)$, using \cite[Theorem 4.9]{BREZIS}, there exists $h_i=h_i(\cdot, x_{i+2},\dots,x_n)\in L^{p_{i+1}}(\R)$ such that
$$
I_i(u_k)(x_1, x_{i+2},\dots,x_n)\le h_i(x_1, x_{i+2},\dots,x_n). 
$$
Moreover, since $u_k(\cdot,x')\to u(\cdot,x')$ in $L^{p_1}(\R)$ we obtain the existence of $h_0(x)$, $h_0(\cdot,x')\in L^{p_1}(\R)$ such that
$$
|u_k(x)|\le h_0(x).
$$
Hence
$$
|\F_{\p}(u_k) - \F_{\p}(u)|^{p_1'}\le C\left(\prod_{i=1}^n h_i^{(p_{i+1}-p_i)p_1'} h_0^{p_1} + \prod_{i=1}^n I_i(u)^{(p_{i+1}-p_i)p_1'} |u|^{p_1} \right) =: \Phi(x_1,x').
$$
Since $\Phi(\cdot,x')\in L^1(\R)$ for a.e. $x'\in\R^{n-1}$ we obtain that $\tilde I_1(k)\to 0$.

The proof of \eqref{cotas.Itilde} now follows by induction and the details are left to the reader.
\end{proof}

Knowing the definition of weak solution we can state our main result of this section.
\begin{thm}\label{teo euler lagrange}
Let $u$ be a function in $W^{1,\p}_0(\Omega).$ Then $u$ is a critical point of \eqref{Qp} if and only if $u$ is a weak solution of \eqref{P}.
\end{thm}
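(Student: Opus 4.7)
The plan is to prove both implications simultaneously by computing the Gateaux derivative of $\Q_{\p}$ at a nontrivial $u\in W^{1,\p}_0(\Omega)$ and identifying its vanishing with the weak formulation of \eqref{P}. Note that for nontrivial $u$ compactly supported in $\Omega$, we have $\|u_{x_i}\|_{p_i}>0$ for every $i$, so the operator $\LL_{\p}$ is well-defined at $u$. Given a test direction $v\in W^{1,\p}_0(\Omega)$, the quotient rule yields
\[
\frac{d}{dt}\Q_{\p}(u+tv)\bigg|_{t=0}=\frac{1}{\|u\|_{\p}}\bigl(A(u,v)-\Q_{\p}(u)B(u,v)\bigr),
\]
with $A(u,v):=\frac{d}{dt}\|\nabla(u+tv)\|_{\p}|_{t=0}$ and $B(u,v):=\frac{d}{dt}\|u+tv\|_{\p}|_{t=0}$, so the whole argument reduces to explicit computation of $A$ and $B$.

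The evaluation of $A(u,v)$ is immediate: since $\|\nabla u\|_{\p}=\sum_i\|u_{x_i}\|_{p_i}$ and each $L^{p_i}$ norm is Gateaux differentiable at a nonzero point with the standard formula, one obtains directly
\[
A(u,v)=\sum_{i=1}^{n}\int_{\Omega}\left|\frac{u_{x_i}}{\|u_{x_i}\|_{p_i}}\right|^{p_i-2}\frac{u_{x_i}}{\|u_{x_i}\|_{p_i}}\,v_{x_i}\,dx,
\]
which is exactly the left-hand side in the definition of weak solution. For $B(u,v)$ I would exploit the recursive structure from Remark \ref{rem.recurrencia} and compute $\frac{d}{dt}I_j(u+tv)|_{t=0}$ by induction on $j$. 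The base case $j=1$ is the usual $L^{p_1}$ derivative, giving $I_1(u)^{1-p_1}\int|u|^{p_1-2}uv\,dx_1$, and the inductive step applies differentiation under the integral in $I_j=\bigl(\int I_{j-1}^{p_j}dx_j\bigr)^{1/p_j}$. A telescoping of the recurrence (absorbing $I_j(u)^{1-p_j}$ into the factor $I_{j-1}(u)^{p_j-p_{j-1}}$ at the next level) produces, under the convention $p_{n+1}=1$,
\[
B(u,v)=\frac{d}{dt}I_n(u+tv)\bigg|_{t=0}=\int_{\R^n}\prod_{i=1}^{n}I_i(u)^{p_{i+1}-p_i}\,|u|^{p_1-2}u\,v\,dx=\int_{\Omega}\F_{\p}(u)\,v\,dx.
\]

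With these two identities in hand, the equivalence is clear. If $u$ is a critical point of $\Q_{\p}$ then $A(u,v)=\Q_{\p}(u)B(u,v)$ for every $v$, which is precisely the weak form of \eqref{P} with $\lambda=\Q_{\p}(u)$. Conversely, if $u$ satisfies $A(u,v)=\lambda B(u,v)$ for every admissible $v$, testing with $v=u$ gives $A(u,u)=\|\nabla u\|_{\p}$ and, by the iterated integration already performed in the proof of Lemma \ref{Fp.bien.def}, $B(u,u)=\int\F_{\p}(u)u\,dx=\|u\|_{\p}$; hence $\lambda=\Q_{\p}(u)$ and the Gateaux derivative vanishes at $u$.

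The main obstacle is the inductive differentiation yielding $B(u,v)$: at every level one must pass a derivative inside a nested integral. This is a dominated-convergence argument, but the required majorants at step $j$ are products of the quantities $I_k(u+tv)$ raised to signed exponents multiplied by $|u+tv|^{p_1-1}|v|$, which is exactly the pattern treated in Lemma \ref{Fp.bien.def}. I would therefore reuse the Brezis-type dominating functions $h_0,h_1,\dots,h_{n-1}$ constructed there (after passing to a subsequence in $t$ if needed) so that the exchange of limit and integral is rigorously justified at each iterate, and hence $B(u,v)$ exists and has the stated form.
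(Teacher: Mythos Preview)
Your argument is correct and follows essentially the same route as the paper: compute the Gateaux derivatives of the numerator and denominator (the paper isolates this as Lemma~\ref{I.diferenciable}, obtaining $H'=\LL_\p$ and $I'=\F_\p$ via the same induction on $I_j$ that you sketch), then apply the quotient rule to $\Q_\p=H/I$ and read off the weak formulation with $\lambda=\Q_\p(u)$. Your explicit treatment of the converse (testing with $v=u$ to force $\lambda=\Q_\p(u)$) and your remarks on dominated convergence for the nested differentiation are a bit more detailed than the paper's proof, but the strategy is the same.
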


To prove this theorem we will use the following notation
\begin{equation}\label{HI}
H(u)=\|\nabla u\|_{\p}\quad \text{and}\quad I(u) = \|u\|_{\p}
\end{equation}
and we need to establish some lemmas that will facilitate the proof.

First, we have to show that the functionals $H$  and $ I$ are Fréchet differentiable.
\begin{lema}\label{I.diferenciable}
$I\colon L^{\p}(\Omega)\to \R$ and $H\colon W^{1,\p}_0(\Omega)\to \R$ are Gateaux differentiable away from zero and its derivatives are given by
\begin{equation}\label{eq.I'}
\frac{d}{dt}I(u+tv)|_{t=0} =  \langle I'(u),v \rangle=\int_{\R^n}\prod_{i=1}^n I_i(u)^{p_{i+1}-p_i} |u|^{p_1-2}uv\,dx,
\end{equation}
where $p_{n+1}=1$ and
\begin{equation}\label{eq.H'}
\frac{d}{dt}H(u+tv)|_{t=0}=    \langle H'(u),v\rangle=\sum_{i=1}^n   \int_{\R^n} \left|\frac{u_{x_i}}{\|u_{x_i}\|_{p_i}}\right|^{p_i-2}\frac{u_{x_i}}{\|u_{x_i}\|_{p_i}}v_{x_i}\,dx.
\end{equation}
That is, $H' = \LL_\p$ and $I' = \F_\p$.
\end{lema}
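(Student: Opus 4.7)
The plan is to compute the Gateaux derivatives by direct calculation, examining the one-variable maps $\varphi(t):=I(u+tv)$ and $\psi(t):=H(u+tv)$ at $t=0$ and using iterated dominated convergence to manage the mixed-norm structure of $I$.

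The functional $H$ is the easier of the two: since $H(u)=\sum_{i=1}^n \|u_{x_i}\|_{p_i}$ is a finite sum of ordinary $L^{p_i}$ norms composed with the continuous linear operators $u\mapsto u_{x_i}$, formula \eqref{eq.H'} follows at once from the classical fact that, on $L^p(\R^n)$ with $p\in(1,\infty)$, the map $w\mapsto\|w\|_p$ is Gateaux differentiable at every $w\ne 0$ with derivative $h\mapsto \|w\|_p^{1-p}\int|w|^{p-2}wh\,dx$. Applying this with $w=u_{x_i}$, $h=v_{x_i}$ and summing over $i$ produces \eqref{eq.H'}; note that the Poincar\'e inequality \eqref{Poincare1} ensures that when $u\ne 0$ in $W^{1,\p}_0(\Omega)$ at least one $\|u_{x_i}\|_{p_i}$ is nonzero, so all denominators involved are well defined.

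For $I$, I would differentiate through the iterated integrals of Remark \ref{rem.recurrencia}. Writing $\phi_j(t;x_{j+1},\dots,x_n):=I_j(u+tv)$, the claim is that, by induction on $j$, for a.e.\ $(x_{j+1},\dots,x_n)$ where $I_j(u)\ne 0$,
\[
\partial_t\phi_j(0)=I_j(u)^{1-p_j}\int_{\R^j}\prod_{i=1}^{j-1} I_i(u)^{p_{i+1}-p_i}\,|u|^{p_1-2}uv\,dx_1\cdots dx_j.
\]
The base case $j=1$ is the standard derivative of $\|\cdot\|_{p_1}$ in the first variable. For the inductive step, using the identity $\phi_{j+1}(t)^{p_{j+1}}=\int_\R \phi_j(t)^{p_{j+1}}\,dx_{j+1}$, I would apply the chain rule in $t$ and differentiate under the integral via dominated convergence. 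After $n$ such steps, the constant $I_n(u)=\|u\|_{\p}$ absorbs the final exponent $p_{n+1}-p_n=1-p_n$, producing exactly $\langle I'(u),v\rangle=\int_{\R^n}\F_{\p}(u)v\,dx$ as stated in \eqref{eq.I'}.

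The main technical obstacle will be producing an integrable majorant in $x_{j+1}$ for the difference quotient $t^{-1}(\phi_j(t)^{p_{j+1}}-\phi_j(0)^{p_{j+1}})$ at each inductive step. I would use the triangle inequality of the iterated norm $|\phi_j(t)-\phi_j(0)|\le |t|\,I_j(v)$ together with the elementary estimate $||a+b|^p-|a|^p|\le C_p(|a|+|b|)^{p-1}|b|$ to obtain, for $|t|\le 1$, a bound of the form $C_{p_{j+1}}(\phi_j(0)+I_j(v))^{p_{j+1}-1}I_j(v)$. This is integrable in $x_{j+1}$ because $u,v\in L^{\p}$ forces $\phi_j(0)=I_j(u)$ and $I_j(v)$ to lie in $L^{p_{j+1}}$ by Remark \ref{rem.norma parcial}, and H\"older's inequality with exponents $p_{j+1}/(p_{j+1}-1)$ and $p_{j+1}$ closes the estimate. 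Once this majorant is in hand, the induction runs without further obstacle, and Lemma \ref{Fp.bien.def} guarantees that the limiting expression $\F_{\p}(u)v$ is in fact integrable on $\R^n$, so the formula \eqref{eq.I'} is meaningful.
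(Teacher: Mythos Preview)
Your proposal is correct and follows essentially the same approach as the paper: the inductive computation of $\partial_t I_j(u+tv)|_{t=0}$ via the recurrence of Remark~\ref{rem.recurrencia} is exactly what the paper does (it writes out $j=1,2$ explicitly and then invokes induction), and for $H$ the paper likewise dismisses the argument as ``standard.'' If anything, you supply more detail than the paper---the dominated-convergence majorant you build from $|\phi_j(t)-\phi_j(0)|\le |t|\,I_j(v)$ and the mean-value-type inequality is a genuine justification that the paper omits---though your remark that Poincar\'e guarantees \emph{all} denominators $\|u_{x_i}\|_{p_i}$ are nonzero is not quite right (it only guarantees at least one is), an issue the paper also leaves implicit.
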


\begin{proof}
To prove \eqref{eq.I'}, let $u,v \in L^{\p}(\Omega)$ and $t\in\R$. Then, recalling Remark \ref{rem.recurrencia}, we compute
\begin{align*} 
\left.\frac{d}{dt}I_1(u+tv)\right|_{t=0} = I_1(u)^{1-p_1}\int_{\R} |u|^{p_1-2}uv\, dx_1.
\end{align*}
Next,
\begin{align*}
\left.\frac{d}{dt}I_2(u+tv)\right|_{t=0}&= I_2(u)^{1-p_2}\int_{\R}I_1(u+tv)^{p_2-1}\left.\frac{d}{dt}I_1(u+tv)\right|_{t=0}\,dx_2 \\
&= \int_{\R^2} I_2(u)^{1-p_2}I_1(u)^{p_2-p_1} |u|^{p_1-2}uv\, dx_1dx_2
\end{align*}
Therefore, by induction, we arrive at 
$$
\left.\frac{d}{dt}I_n(u+tv)\right|_{t=0} = \int_{\R^n} \prod_{i=1}^n I_i(u)^{p_{i+1}-p_i} |u|^{p_1-2}uv\, dx,
$$
where $p_{n+1}=1$ and the proof of \eqref{eq.I'} follows observing that $I_n=I$.

The proof of \eqref{eq.H'} is standard and the details are left to the reader.
\end{proof}

\begin{thm}\label{Frechet.dif}
The functionals $I$ and $H$ given in \eqref{HI} are Fr\'echet differentiable.
\end{thm}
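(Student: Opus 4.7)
The plan is to invoke the classical principle that a Gateaux differentiable functional whose Gateaux derivative depends continuously on the base point (as an element of the topological dual) is in fact (continuously) Fr\'echet differentiable. Since Lemma~\ref{I.diferenciable} already identifies the Gateaux derivatives, namely $I'=\F_{\p}$ and $H'=\LL_{\p}$ away from zero, the task reduces to showing that the maps $u\mapsto I'(u)$ and $u\mapsto H'(u)$ are continuous into the corresponding dual spaces.

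For $I$ the matter is essentially settled by work already done. The functional $\langle I'(u),\cdot\rangle$ acts on $v\in L^{\p}(\Omega)$ by $\int\F_{\p}(u)\,v\,dx$, and under the duality $(L^{\p}(\Omega))^{\ast}\simeq L^{\p'}(\Omega)$ (together with the mixed-norm H\"older inequality) its operator norm agrees with $\|\F_{\p}(u)\|_{\p'}$. Continuity of $u\mapsto I'(u)$ in the dual norm is therefore precisely the content of Lemma~\ref{Fp.bien.def}.

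For $H$ I would work coordinate by coordinate. Setting $\phi_i(w):=|w|^{p_i-2}w/\|w\|_{p_i}^{p_i-1}$, so that $\langle H'(u),v\rangle=\sum_{i=1}^{n}\int\phi_i(u_{x_i})v_{x_i}\,dx$, H\"older's inequality yields
\[
\|H'(u)-H'(\tilde u)\|_{(W^{1,\p}_0)^{\ast}}\le \sum_{i=1}^{n}\|\phi_i(u_{x_i})-\phi_i(\tilde u_{x_i})\|_{p_i'},
\]
so it suffices to prove that each $u\mapsto\phi_i(u_{x_i})$ is continuous from $W^{1,\p}_0(\Omega)\setminus\{0\}$ into $L^{p_i'}(\Omega)$. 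I would decompose this as (i) the continuous projection $u\mapsto u_{x_i}$ into $L^{p_i}(\Omega)$; (ii) the standard continuity of the Nemytskii operator $w\mapsto |w|^{p_i-2}w$ from $L^{p_i}(\Omega)$ to $L^{p_i'}(\Omega)$; and (iii) the continuity of the scalar $w\mapsto\|w\|_{p_i}$, which stays bounded away from zero in a neighbourhood of any $u\neq 0$, since a nontrivial $u\in W^{1,p_i}_0(\Omega)$ with bounded support cannot have a vanishing partial derivative in a single coordinate direction. Combining these three ingredients (and normalizing appropriately) gives the required continuity of $H'$.

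The main obstacle is essentially absorbed into Lemma~\ref{Fp.bien.def}, whose proof handles the iterated integral structure of $\|\cdot\|_{\p}$ by a layered dominated convergence argument; once that lemma and the standard Nemytskii continuity are in hand, the upgrade from Gateaux to Fr\'echet is automatic by the general principle recalled at the outset. Fr\'echet differentiability is of course only claimed away from $u=0$, in line with the singularities at the origin of the norms $\|\cdot\|_{\p}$ and $\|\nabla\cdot\|_{\p}$ themselves.
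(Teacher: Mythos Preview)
Your approach is essentially the same as the paper's: use Lemma~\ref{I.diferenciable} to obtain the Gateaux derivatives $I'=\F_\p$ and $H'=\LL_\p$, invoke Lemma~\ref{Fp.bien.def} for the continuity of $\F_\p$, and then appeal to the standard fact that Gateaux differentiability plus continuity of the derivative yields Fr\'echet differentiability. The paper simply declares the continuity of $\LL_\p$ an ``easy exercise,'' whereas you spell out the coordinatewise Nemytskii decomposition---so your version is, if anything, slightly more complete.
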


\begin{proof}
The proof follows easily from Lemma \ref{I.diferenciable} just observing that $I'=\F_\p$ and $H' = \LL_\p$ are continuous. 
In fact, the continuity of $\F_\p$ is proved in Lemma \ref{Fp.bien.def} and the continuity of $\LL_\p$ is an easy excercise.
\end{proof}

At this point we can give a rigorous proof of Theorem \ref{teo euler lagrange}.
\begin{proof}[Proof of Theorem \ref{teo euler lagrange} ]
Recall that, since
$$
\Q_{\p}(u) = \frac{H(u)}{I(u)},
$$
using Lemma \ref{I.diferenciable}, one obtain that, if $u\neq 0$,
$$
\langle \Q_{\p}'(u), v\rangle = \frac{1}{I(u)} \left(\langle H'(u), v\rangle - \Q_{\p}(u)\langle I'(u), v\rangle\right).
$$
Hence, $u\in W^{1,\p}_0(\Omega)$ is a critical point of $\Q_{\p}$ if and only if
$$
\langle H'(u), v\rangle = \Q_{\p}(u)\langle I'(u), v\rangle.
$$
But this is the same as saying that $u$ is a weak solution to \eqref{P} with $\lambda=\Q_{\p}(u)$.
\end{proof}

\subsection{The fractional case}
Now, we will analyze the fractional case. So, we consider the Rayleigh-type quotient $\mathcal{Q_{\s,\p}}$ defined in \eqref{Qsp}, and look for the Euler-Lagrange equation associated to it.

The main result of this section is to show that the E-L equation is given by
\begin{equation}\label{Ps}
\begin{cases}
-\LL_{\s,\p} u = \lambda \F_{\p}(u)\quad &\text{ in }\Omega\\
u=0&\text{ in } \R^n\setminus \Omega,
\end{cases}
\end{equation}
where $\F_{\p}$ is given by \eqref{F_p} and
\begin{align*}
\LL_{\s,\p} u = \text{p.v.} \sum_{i=1}^n\int_{\R^n}\int_\R \left|\frac{D^{s_i,i}_h u(x)}{[u]_{s_i,p_i,i}}\right|^{p_i-2}\frac{D^{s_i,i}_h u(x)}{[u]_{s_i,p_i,i}} \frac{dh}{|h|^{1+s_i}}\,dx,\\
=\lim_{\ve\to 0}\sum_{i=1}^n\int_{\R^n}\int_{|h|>\ve} \left|\frac{D^{s_i,i}_h u(x)}{[u]_{s_i,p_i,i}}\right|^{p_i-2}\frac{D^{s_i,i}_h u(x)}{[u]_{s_i,p_i,i}} \frac{dh}{|h|^{1+s_i}}\,dx
\end{align*}
and
$$
D^{s,i}_h u(x) = \frac{u(x+he_i)-u(x)}{|h|^s}.
$$

It is shown in \cite{CERESABONDER23} that the operator $\LL_{\s,\p}$ is the fractional version of $\LL_{\p}$. This operator $\LL_{\s,\p}$ has to be understood in the weak sense, i.e. given $u, v\in W^{\s,\p}_0(\Omega)$,
$$
\langle -\LL_{\s,\p} u, v\rangle = \sum_{i=1}^n\int_{\R^n}\int_\R \left|\frac{D^{s_i,i}_h u(x)}{[u]_{s_i,p_i,i}}\right|^{p_i-2}\frac{D^{s_i,i}_h u(x)}{[u]_{s_i,p_i,i}} D^{s_i,i}_h v(x) \frac{dh}{|h|}\,dx.
$$

Again, we have to give a definition of weak solution.
\begin{defn}
Let $u$ be a function in $W^{\s,\p}_0(\Omega)$,  then $u$ is a weak solution of
\eqref{Ps} if $u$ verifies  
\begin{align*}
\sum_{i=1}^n\int_{\R^n}\int_\R \left|\frac{D^{s_i,i}_h u(x)}{[u]_{s_i,p_i,i}}\right|^{p_i-2}\frac{D^{s_i,i}_h u(x)}{[u]_{s_i,p_i,i}} D^{s_i,i}_h v(x) \frac{dh}{|h|}\,dx = \lambda \int_{\R^n}\F_{\p}(u)v\,dx,
\end{align*}
for all $v\in W^{\s,\p}_0(\Omega).$
\end{defn}

Again, we introduce the notation 
\begin{equation}\label{Hs}
H_{\s}(u)=[u]_{\s,\p}
\end{equation} 
and in an analogous form as in Lemma \ref{I.diferenciable} we have the following lemma, whose proof is left to the reader.
\begin{lema}\label{lemaf}
The functional $H_{\s}\colon W^{\s,\p}_0(\Omega)\to \R$ is Gateaux differentiable away from zero and its derivative is given by
\begin{align*}
\langle H_{\s}'(u), v\rangle = \sum_{i=1}^n\int_{\R^n}\int_\R \left|\frac{D^{s_i,i}_h u(x)}{[u]_{s_i,p_i,i}}\right|^{p_i-2}\frac{D^{s_i,i}_h u(x)}{[u]_{s_i,p_i,i}} D^{s_i,i}_h v(x) \frac{dh}{|h|}\,dx.
\end{align*}
That is $H_\s'=\LL_{\s,\p}$.
\end{lema}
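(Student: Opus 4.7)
The plan is to decompose the functional along the $n$ coordinate directions. Writing
$$H_\s(u) = \sum_{i=1}^n G_i(u)^{1/p_i}, \qquad G_i(u) := [u]_{s_i,p_i,i}^{p_i} = \int_{\R^n}\int_\R |D^{s_i,i}_h u(x)|^{p_i}\,\frac{dh\,dx}{|h|},$$
the claim reduces to computing the Gateaux derivative of each $G_i$ and applying the chain rule to $t\mapsto t^{1/p_i}$. The chain-rule prefactor $\tfrac{1}{p_i}[u]_{s_i,p_i,i}^{1-p_i}$ cancels the factor $p_i$ produced by $G_i'$, and the surviving $[u]_{s_i,p_i,i}^{1-p_i}$ is exactly the denominator appearing inside the $|\cdot|^{p_i-2}$ and the linear factor of the stated formula, via the algebraic identity $c^{1-p}|a|^{p-2}a=|a/c|^{p-2}(a/c)$ valid for $c>0$.

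The main technical step, and the place where I expect most of the work to sit, is justifying differentiation under the integral in $G_i$ against the measure $dh\,dx/|h|$. For fixed $u,v\in W^{\s,\p}_0(\Omega)$ and $|t|\le 1$ I would use the pointwise inequality
$$\bigl|p_i|a+tb|^{p_i-2}(a+tb)\,b\bigr|\le C_{p_i}\bigl(|a|^{p_i-1}+|b|^{p_i-1}\bigr)|b|,\qquad a=D^{s_i,i}_h u,\ b=D^{s_i,i}_h v,$$
combined with H\"older's inequality for the conjugate pair $(p_i/(p_i-1),p_i)$, to bound this majorant in $L^1(dh\,dx/|h|)$ by $C_{p_i}\bigl([u]_{s_i,p_i,i}^{p_i-1}[v]_{s_i,p_i,i}+[v]_{s_i,p_i,i}^{p_i}\bigr)$, which is finite since $u,v\in W^{\s,\p}_0(\Omega)$. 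Dominated convergence then yields
$$\langle G_i'(u),v\rangle = p_i\int_{\R^n}\int_\R |D^{s_i,i}_h u|^{p_i-2}D^{s_i,i}_h u\,D^{s_i,i}_h v\,\frac{dh\,dx}{|h|}.$$

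Finally, legitimizing the chain rule for $t\mapsto t^{1/p_i}$ at $G_i(u)$ requires $G_i(u)>0$ for every $i$, which is where the \emph{away from zero} hypothesis enters. If $[u]_{s_i,p_i,i}=0$ for some $i$, then $u$ is translation invariant in the direction $e_i$; combined with the fact that functions in $W^{\s,\p}_0(\Omega)$ vanish outside the bounded set $\Omega$, this forces $u\equiv 0$, contradicting $u\neq 0$. Consequently each $[u]_{s_i,p_i,i}$ is strictly positive, the chain rule applies to each term, and the algebraic rewriting described above then produces the stated formula for $\langle H_\s'(u),v\rangle$ after summing over $i=1,\dots,n$.
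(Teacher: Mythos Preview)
Your proposal is correct and follows exactly the standard route the paper has in mind: the paper leaves the proof to the reader, pointing to the analogous Lemma~\ref{I.diferenciable}, whose computation for $H$ is itself declared ``standard''. Your decomposition $H_\s=\sum_i G_i^{1/p_i}$, the dominated-convergence bound via H\"older on the majorant $(|a|^{p_i-1}+|b|^{p_i-1})|b|$, and the chain-rule step with the algebraic identity $c^{1-p}|a|^{p-2}a=|a/c|^{p-2}(a/c)$ constitute precisely the details the paper omits; your justification that $[u]_{s_i,p_i,i}>0$ for $u\neq 0$ (using that $u$ vanishes outside the bounded set $\Omega$) is the right way to handle the ``away from zero'' hypothesis.
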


Finally, we can state the Euler-Lagrange Theorem for the fractional case, and its proof is analogous to the non-fractional case and therefore is ommitted. 
\begin{thm}\label{teoeulerf}
Let $u$ be a function in $W^{\s,\p}_0(\Omega).$ Then $u$ is a critical point of \eqref{Qsp} if and only if $u$ is a weak solution of the Euler Lagrange equation  \eqref{Ps}
\end{thm}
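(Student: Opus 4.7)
The plan is to mimic step by step the proof of Theorem \ref{teo euler lagrange} in the non-fractional case, replacing the anisotropic gradient functional $H$ by its fractional analogue $H_{\s}$ defined in \eqref{Hs}. Writing
$$
\Q_{\s,\p}(u) = \frac{H_{\s}(u)}{I(u)},
$$
the quotient rule reduces the whole problem to the differentiability of the numerator and the denominator separately, together with the continuity of their derivatives.

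The first step is to collect the Gateaux derivatives: $I'(u) = \F_{\p}(u)$ is given by Lemma \ref{I.diferenciable}, and $H_{\s}'(u) = \LL_{\s,\p} u$ is exactly Lemma \ref{lemaf}. The second step is to upgrade these to Fr\'echet derivatives on the relevant space $W^{\s,\p}_0(\Omega)\setminus\{0\}$. For $I'$ this is already done in Theorem \ref{Frechet.dif}, via the continuity of $\F_{\p}\colon L^{\p}(\R^n)\to L^{\p'}(\R^n)$ proved in Lemma \ref{Fp.bien.def}, combined with the continuous embedding $W^{\s,\p}_0(\Omega)\hookrightarrow L^{\p}(\Omega)$ provided by the Rellich-Kondrachov proposition. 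For $H_{\s}'$ one must check that $\LL_{\s,\p}\colon W^{\s,\p}_0(\Omega)\setminus\{0\}\to (W^{\s,\p}_0(\Omega))^*$ is continuous; this is a routine computation in the spirit of the continuity of $\LL_{\p}$ mentioned at the end of the proof of Theorem \ref{Frechet.dif}, obtained by expanding the seminorm $[\,\cdot\,]_{s_i,p_i,i}$ coordinate by coordinate, applying H\"older's inequality in the $dh\,dx/|h|$ measure with exponents $p_i$ and $p_i'$, and finally using that the map $u \mapsto [u]_{s_i,p_i,i}$ is continuous and bounded away from zero for $u\neq 0$.

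Once Fr\'echet differentiability is in hand, the quotient rule yields, for every $u\neq 0$ and every $v\in W^{\s,\p}_0(\Omega)$,
$$
\langle \Q_{\s,\p}'(u), v\rangle \;=\; \frac{1}{I(u)}\Bigl( \langle H_{\s}'(u), v\rangle - \Q_{\s,\p}(u)\,\langle I'(u), v\rangle \Bigr).
$$
Thus $u$ is a critical point of $\Q_{\s,\p}$ if and only if
$$
\langle \LL_{\s,\p} u, v\rangle = \Q_{\s,\p}(u)\int_{\R^n} \F_{\p}(u)\,v\,dx \quad\text{for every } v\in W^{\s,\p}_0(\Omega),
$$
which is exactly the definition of a weak solution of \eqref{Ps} with eigenvalue $\lambda = \Q_{\s,\p}(u)$. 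The converse is immediate by reading the identity backwards.

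The main technical obstacle, compared with the non-fractional proof, is the continuity of the nonlocal operator $\LL_{\s,\p}$, because unlike $\LL_{\p}$ it involves singular integrals over $\R^n\times\R$ with the measure $dh/|h|$. The cleanest way I would handle this is to take a sequence $u_k\to u$ in $W^{\s,\p}_0(\Omega)$, pass to a subsequence so that $D^{s_i,i}_h u_k\to D^{s_i,i}_h u$ pointwise a.e. and is dominated (as in Remark \ref{rem.norma parcial} and Lemma \ref{Fp.bien.def}), and then invoke dominated convergence in each coordinate summand; the normalizing factors $[u_k]_{s_i,p_i,i}\to [u]_{s_i,p_i,i}>0$ cause no trouble away from $u=0$. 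Everything else is a direct transcription of the non-fractional argument.
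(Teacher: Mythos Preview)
Your proposal is correct and follows essentially the same approach the paper intends: the paper omits the proof entirely, stating only that it ``is analogous to the non-fractional case and therefore is omitted,'' and your argument is precisely that analogy carried out in detail. The only addition you make beyond the paper's treatment is the explicit discussion of the continuity of $\LL_{\s,\p}$ needed for Fr\'echet differentiability, which the paper leaves implicit; your sketch via H\"older in the $dh\,dx/|h|$ measure and the continuity of the seminorms $[\,\cdot\,]_{s_i,p_i,i}$ is the natural way to fill this in.
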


\section{General properties of eigenvalues}\label{section4}
After having derived the Euler-Lagrange equation for each case, it becomes evident that these are eigenvalue problems, for which we can explore some properties.

We say that $\lambda \in \R$ is an eigenvalue of $\LL_{\p}$ under Dirichlet boundary conditions in the domain $\Omega$, if problem \eqref{P} admits a nontrivial weak solution $u \in W^{1,\p}_0(\Omega)$. Then $u$ is called an eigenfunction of $\LL_{\p}$ corresponding to $\lambda$. We will denote $\Sigma^{\p}$ the collection of these eigenvalues.

Similarly, we say that $\lambda \in \R$ is an eigenvalue of $\LL_{\s,\p}$ under Dirichlet boundary conditions in the domain $\Omega$, if problem \eqref{Ps} admits a nontrivial weak solution $u \in W^{\s,\p}_0(\Omega)$. Then $u$ is called an eigenfunction of $\LL_{\s,\p}$ corresponding to $\lambda$. We will denote $\Sigma^{\s,\p}$ the collection of these eigenvalues.

We begin this section by collecting some simple properties for the eigensets $\Sigma^\p$ and $\Sigma^{\s,\p}$.

\begin{prop}
$\Sigma^\p, \Sigma^{\s,\p}\subset (0,\infty)$ are closed sets.
\end{prop}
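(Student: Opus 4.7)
The plan is to prove two claims: every eigenvalue is strictly positive, and any limit in $\R$ of eigenvalues is itself an eigenvalue. Both the local and fractional cases share the same skeleton, so I describe $\Sigma^\p$ and indicate at the end what changes for $\Sigma^{\s,\p}$.

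For strict positivity, the key fact is that $H(u)=\|\nabla u\|_\p$ and $I(u)=\|u\|_\p$ are positively $1$-homogeneous, so by Euler's identity combined with Lemma \ref{I.diferenciable}, one has $\langle H'(u),u\rangle=H(u)$ and $\langle I'(u),u\rangle=I(u)$. Testing the weak formulation of \eqref{P} against $v=u$ gives $H(u)=\lambda I(u)$, that is $\lambda=\Q_\p(u)$. The Poincaré inequality \eqref{Poincare1} then yields $\lambda=\|\nabla u\|_\p/\|u\|_\p\geq 1/C_1>0$, so $\Sigma^\p\subset[1/C_1,\infty)$, which simultaneously rules out any sequence of eigenvalues accumulating at $0$ and reduces closedness in $(0,\infty)$ to closedness in $\R$.

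For closedness, take $\{\lambda_k\}\subset\Sigma^\p$ with $\lambda_k\to\lambda$ and associated eigenfunctions $u_k$. Scale invariance of $\Q_\p$ lets me normalize $\|u_k\|_\p=1$, which forces $\|\nabla u_k\|_\p=\lambda_k$; hence $\{u_k\}$ is bounded in $W^{1,\p}_0(\Omega)$. By reflexivity and the Rellich-Kondrashov proposition, a subsequence satisfies $u_k\rightharpoonup u$ in $W^{1,\p}_0(\Omega)$ and $u_k\to u$ in $L^\p(\Omega)$, so $\|u\|_\p=1$ and $u\neq 0$.

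The main obstacle is then to pass to the limit in the weak formulation, since $\LL_\p$ is nonlinear and its summands carry the normalizations $\|u_{x_i}\|_{p_i}$. The right-hand side behaves well thanks to the continuity $\F_\p(u_k)\to\F_\p(u)$ in $L^{\p'}(\Omega)$ established in Lemma \ref{Fp.bien.def}. For the left-hand side I intend to upgrade weak to strong convergence in $W^{1,\p}_0(\Omega)$ by a standard monotonicity argument: testing the $u_k$-equation against $u_k-u$, combining the continuity of $\F_\p$ with the $L^\p$-convergence of $u_k$, and subtracting $\langle H'(u),u_k-u\rangle\to 0$ (from weak convergence), one gets $\langle H'(u_k)-H'(u),u_k-u\rangle\to 0$. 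After extracting a further subsequence along which each coefficient $\|(u_k)_{x_i}\|_{p_i}\to c_i\geq 0$ with $\sum c_i=\lambda>0$, classical uniform-convexity/Clarkson-type inequalities applied in each $L^{p_i}$ slot force $(u_k)_{x_i}\to u_{x_i}$ in $L^{p_i}(\Omega)$ for every $i$. With strong convergence in hand, every term in the weak formulation passes to the limit, so $u$ is a nontrivial solution of \eqref{P} with eigenvalue $\lambda$, proving $\lambda\in\Sigma^\p$. For $\Sigma^{\s,\p}$ the same outline works verbatim, substituting $\nabla$ by $D^{s_i,i}_h$, the fractional Rellich-Kondrashov in place of the local one, and Lemma \ref{lemaf} for the derivative of $H_\s$.
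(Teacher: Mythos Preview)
Your proposal is correct and follows essentially the same route as the paper: positivity via testing the equation against $u$, then closedness by normalizing eigenfunctions, extracting a weakly convergent subsequence, using Lemma~\ref{Fp.bien.def} for the right-hand side, and upgrading to strong convergence in $W^{1,\p}_0(\Omega)$ via the monotonicity of $H'$. The only difference is packaging: the paper invokes Lemma~\ref{lemma1} (the abstract (H3) condition, proved there by Minty's trick plus uniform convexity) as a black box, while you unfold the same monotone-operator argument inline and finish with the uniform convexity of each $L^{p_i}$ slot separately---which is arguably cleaner, since it sidesteps the question of whether the $\ell^1$-type sum norm $\sum_i\|u_{x_i}\|_{p_i}$ is itself uniformly convex.
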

\begin{proof}
As we have done throughout the article, we will only provide the proof for the non-fractional case, leaving the fractional case for the reader.

First, let $\lambda\in \Sigma^\p$ and $u\in W^{1,\p}_0(\Omega)$ be an associated eigenfunction. So, if we take the same $u$ as a test function in the weak formulation of \eqref{P}, we obtain that
$$
\|\nabla u\|_{\p} = \lambda \|u\|_{\p}.
$$
Therefore, $\lambda>0$

Next, let us see that $\Sigma^\p$ is closed. To this end, let $\{\lambda_k\}$ be a sequence of eigenvalues such that $\lambda_k\to \lambda\in \R$ as $k\to \infty$ and $\{u_k\}_{k\in\N}\subset W^{1,\p}_0(\Omega)$ be a corresponding sequence of  $L^{\p}$-normalized eigenfunctions. Observe that  
$$
\|\nabla u_k\|_{\p}= \lambda_k \|u_k\|_{\p} = \lambda_k,
$$
from where it follows that $\{u_k\}_{k\in\N}$ is a bounded sequence in $W^{1,\p}_0(\Omega)$. Hence, passing to a subsequence, we get that
$$
u_k\rightharpoonup u \text{ in }W^{1,\p}_0(\Omega)\text{ and } u_k \to u \text{ in } L^{\p}(\Omega).
$$

From Lemma \ref{Fp.bien.def}, $\F_\p$ is continuous and we get that
$$
\langle \LL_\p u_k, v\rangle=\lambda_k\langle \F_\p(u_k),v\rangle \to \lambda\langle \F_p(u),v\rangle.
$$
As each $u_k$ is an $L^{\p}$-normalized eigenfunction, then
$$
\langle \LL_\p u_k, u_k\rangle=\lambda_k\to \lambda.
$$
Now, we make use of Lemma \ref{lemma1}, that is proved in the next section, to obtain that $u_k\to u$ in $W^{1,\p}_0(\Omega)$.

Hence, we can pass to the limit $k\to\infty$ in the weak formulation
$$
\langle \LL_\p u_k,v\rangle=\lambda_k\langle \F_p(u_k),v\rangle
$$
and get that
$$
\langle \LL_\p u,v\rangle=\lambda\langle \F_\p(u),v\rangle
$$
for any $v\in W^{1,\p}_0(\Omega)$. That is, $u$ is eigenfunction associated to $\lambda$ and the proof is complete.
\end{proof}

Now, we arrive at the main point of this section, that is the asymptotic behavior of the eigenset $\Sigma^{\s,\p}$ as the fractional parameters $\s=(s_1,\dots,s_n)$ verify that $s_i\to 1$, $i=1,\dots,n$.

To this end, we will make use of the following result which is a particular case of \cite[Theorem 3.3]{CERESABONDER23}. 

\begin{prop}\cite[Theorem 3.3]{CERESABONDER23}\label{stability.semilinal}
Let $\{\s_k\}_{k\in\N}$ be a sequence of fractional parameters $\s_k\to (1,\dots,1)$ as $k\to\infty$. Let $\p=(p_1,\dots,p_n)$ be such that $1<p_i<\infty$ for each $i=1,\dots, n$ and for each $k\in \N$ let $u_k\in W^{\s_k,\p}_0(\Omega)$ be such that
\begin{equation}\label{cota_uk}
\sup_{k\in\N} \|u_k\|_{\s_k,\p} <\infty.
\end{equation}

Then, there exists a function $u\in W^{1,\p}_0(\Omega)$ and a subsequence $\{u_{k_j}\}_{j\in\N}\subset \{u_k\}_{k\in\N}$ such that
$$
u_{k_j}\to u \quad \text{in } L^\p(\Omega) \quad \text{and}\quad \|\nabla u\|_{\p}\le \liminf_{k\to\infty} \,[u_k]_{\s_k,\p}.
$$
\end{prop}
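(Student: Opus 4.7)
The plan is to adapt the Bourgain--Brezis--Mironescu (BBM) asymptotics for $s\to 1^-$ to the anisotropic mixed-norm setting. The argument decomposes into three stages: (i) extract an $L^{\p}(\Omega)$-convergent subsequence; (ii) show the limit lies in $W^{1,\p}_0(\Omega)$; (iii) prove the liminf inequality $\|\nabla u\|_{\p}\le\liminf_k [u_k]_{\s_k,\p}$. Because $[u]_{\s,\p}$ is a sum of one-direction contributions, each stage reduces by Fubini to the classical one-dimensional BBM theory on affine slices parallel to $e_i$.

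For stage (i), since each $u_k$ is supported in the bounded set $\Omega$ and $\|u_k\|_{\p}\le C$, H\"older's inequality together with $|\Omega|<\infty$ yields $\|u_k\|_{L^{p_i}(\Omega)}\le C$ for every $i=1,\dots,n$. To apply Fr\'echet--Kolmogorov in $L^{p_i}(\Omega)$ one needs uniform continuity of translations, and this is provided by Ponce's BBM-type compactness applied to one-dimensional slices parallel to $e_i$, starting from the hypothesis
$$
(1-s_{k,i})\int_{\R^n}\int_\R \frac{|u_k(x+he_i)-u_k(x)|^{p_i}}{|h|^{1+s_{k,i}p_i}}\,dh\,dx\le C.
$$
Iterating over $i$ and diagonalizing, one obtains a subsequence $\{u_{k_j}\}$ convergent in $L^{p_i}(\Omega)$ for each $i$, hence in $L^{p_n}(\Omega)$, and thus in $L^{\p}(\Omega)$ via the continuous embedding $L^{p_n}(\Omega)\hookrightarrow L^{\p}(\Omega)$ (valid since $p_i\le p_n$ and $|\Omega|<\infty$).

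For stages (ii) and (iii), the key ingredient is the one-directional BBM liminf inequality: if $u_k\to u$ in $L^{p_i}$ and the weighted double integral above is uniformly bounded in $k$, then $\partial_{x_i}u\in L^{p_i}(\Omega)$ and
$$
c_{p_i}\|\partial_{x_i}u\|_{p_i}^{p_i}\le \liminf_{k\to\infty}(1-s_{k,i})\int_{\R^n}\int_\R \frac{|u_k(x+he_i)-u_k(x)|^{p_i}}{|h|^{1+s_{k,i}p_i}}\,dh\,dx
$$
for a universal BBM constant $c_{p_i}$ (which can be absorbed into the definition of $[\cdot]_{s_i,p_i,i}$). This is proved by pairing against $\varphi\in C_c^\infty(\Omega)$, reinterpreting the BBM weight as a family of radial mollifiers, integrating by parts, and passing to the limit via Fatou. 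Summing the resulting bounds over $i=1,\dots,n$ gives $u\in W^{1,\p}_0(\Omega)$ and the claimed liminf inequality.

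The main obstacle is stage (i): the standard Rellich--Kondrashov theorem for a \emph{fixed} fractional anisotropic space does not apply because the parameter vector $\s_k$ varies, and naively comparing with a fixed $\s_0<(1,\dots,1)$ loses a factor $(1-s_{k,i})^{-1}\to\infty$. The correct workaround is Ponce's variant of BBM compactness, in which the family of mollifiers is allowed to vary in a controlled way; the anisotropic adaptation requires a careful slice-by-slice Fubini argument. Once this compactness is secured, the remainder is a standard lower-semicontinuity exercise, and the result then fits into the framework of Theorem 3.3 of \cite{CERESABONDER23}.
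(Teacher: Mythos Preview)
The paper does not prove this proposition; it is stated as a particular case of \cite[Theorem 3.3]{CERESABONDER23} and simply quoted without argument. There is therefore no in-paper proof to compare your attempt against.

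That said, your BBM-type outline is the expected route and is essentially correct. One minor gap in stage~(i): the claim that $\|u_k\|_\p\le C$ together with H\"older yields $\|u_k\|_{L^{p_i}(\Omega)}\le C$ for \emph{every} $i$ is not immediate---on a bounded domain the mixed norm $\|\cdot\|_\p$ controls $\|\cdot\|_{p_1}$ (the smallest exponent) but not the larger $p_i$'s in general. The cleaner fix is to read off each $\|u_k\|_{p_i}$ bound directly from the $i$-th directional seminorm via the Poincar\'e-type computation in the paper's Proposition~2.3, whose constant is easily seen to stay bounded as $s_{k,i}\to 1^-$. With that adjustment, your compactness-via-Ponce and liminf-via-BBM scheme is the standard argument for this kind of stability result and is presumably close to what \cite{CERESABONDER23} actually does.
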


Moreover, we also need to borrow a Lemma from \cite{CERESABONDER23}.
\begin{lema}\label{lema.clave}\cite[Lemma 5.7]{CERESABONDER23}
Let $\{\s_k\}_{k\in\N}$ be a sequence of fractional parameters satisfying that $\s_k\to(1,\dots,1)$ as $k\to\infty$ and let $v_k\in W^{\s_k,\p}_0(\Omega)$. Assume that $\{v_k\}_{k\in\N}$ satisfy \eqref{cota_uk}, and let $u\in W^{1,\p}_0(\Omega)$ be fixed.

Without loss of generality, we can assume that there existes $v\in W^{1,\p}_0(\Omega)$ such that $v_k\to v$ in $L^\p(\Omega)$ as $k\to\infty$.

Then
$$
\langle \LL_\p^{\s_k} u, v_k\rangle \to \langle \LL_\p u, v \rangle\quad\text{as } k\to\infty.
$$
\end{lema}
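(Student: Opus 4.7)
The plan is to pass to the limit coordinate by coordinate in
$$
\langle \LL_{\s_k,\p} u, v_k \rangle = \sum_{i=1}^n \frac{B_k^i(u,v_k)}{[u]_{s_{k,i},p_i,i}^{p_i-1}},
$$
where
$$
B_k^i(u,v_k):=\int_{\R^n}\!\!\int_\R \frac{|u(x+he_i)-u(x)|^{p_i-2}(u(x+he_i)-u(x))(v_k(x+he_i)-v_k(x))}{|h|^{1+s_{k,i} p_i}}\,dh\,dx.
$$
Since $u\in W^{1,\p}_0(\Omega)$ is fixed and each $s_{k,i}\to 1$, the anisotropic Bourgain--Brezis--Mironescu-type convergence developed in \cite{CERESABONDER23} yields $[u]_{s_{k,i},p_i,i}\to C_i\|u_{x_i}\|_{p_i}$ (modulo the $(1-s_{k,i})$-rescaling implicit in the chosen convention), so the statement reduces to showing $B_k^i(u,v_k)\to C_i'\int|u_{x_i}|^{p_i-2}u_{x_i}\,v_{x_i}\,dx$ for each $i$.

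For this bilinear convergence I would interpret $B_k^i$ as an $L^{p_i'}$--$L^{p_i}$ duality pairing with respect to $dh\,dx$, realized via
$$
\phi_k^i(x,h) := \frac{|u(x+he_i)-u(x)|^{p_i-2}(u(x+he_i)-u(x))}{|h|^{(p_i-1)(1+s_{k,i}p_i)/p_i}},\quad \psi_k^i(x,h) := \frac{v_k(x+he_i)-v_k(x)}{|h|^{(1+s_{k,i}p_i)/p_i}},
$$
so that $\|\phi_k^i\|_{p_i'}^{p_i'} = [u]_{s_{k,i},p_i,i}^{p_i}$ and $\|\psi_k^i\|_{p_i}^{p_i} = [v_k]_{s_{k,i},p_i,i}^{p_i}$ are uniformly bounded by hypothesis \eqref{cota_uk}. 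A density reduction first replaces $u$ by a smooth approximant $u_\delta\in C^\infty_c(\Omega)$, the resulting error controlled by a uniform H\"older inequality on $B_k^i$ in the spirit of Lemma \ref{Fp.bien.def}. For smooth $u_\delta$, Taylor expansion $u_\delta(x+he_i)-u_\delta(x)=h\partial_i u_\delta(x)+O(h^2)$ combined with the compact support of $u_\delta$ and dominated convergence produces strong $L^{p_i'}$-convergence of $\phi_k^i$ to a product of $|\partial_i u_\delta|^{p_i-2}\partial_i u_\delta$ with an explicit $h$-kernel. Finally, the $L^\p$-strong convergence $v_k\to v$ from Proposition \ref{stability.semilinal} lets one identify the weak $L^{p_i}$-limit of $\psi_k^i$ with the analogous object built from $\partial_i v$, and the strong/weak pairing yields the bilinear limit.

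The hardest point will be this joint passage to the limit: both the fractional parameter $s_{k,i}$ and the test function $v_k$ depend on $k$, so the two limits are entangled and cannot be iterated naively. I would resolve this by splitting the $h$-integral into $|h|\le 1$ and $|h|>1$. On $|h|\le 1$ the smoothness of $u_\delta$ together with the renormalization by $(1-s_{k,i})^{1/p_i'}$ keeps $\phi_k^i$ pointwise dominated by an integrable function, so dominated convergence applies after the approximation; on $|h|>1$ the compact support of $u$ together with the integrability of $|h|^{-(1+s_{k,i}p_i)}$ at infinity give uniform control. Carefully tracking the $(1-s_{k,i})$-factors throughout is the main bookkeeping task, pinning down the correct constants $C_i,C_i'$ so that the limit matches $\langle \LL_\p u,v\rangle$.
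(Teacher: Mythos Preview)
The paper does not prove this lemma; it is simply quoted from \cite[Lemma~5.7]{CERESABONDER23} and used as a black box. So there is no argument in the present paper to compare your proposal against.

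Your sketch follows the natural Bourgain--Brezis--Mironescu route and is broadly on track, but one step is glossed over and is exactly where the substance lies. You write that the strong $L^\p$-convergence $v_k\to v$ ``lets one identify the weak $L^{p_i}$-limit of $\psi_k^i$ with the analogous object built from $\partial_i v$''. This identification is not automatic: the $\psi_k^i$ are fractional difference quotients of $v_k$ with \emph{moving} exponent $s_{k,i}$, and knowing $v_k\to v$ in $L^\p$ gives no direct information on their weak limit. The standard way out is to test against smooth $\varphi(x,h)$, shift the difference operator onto $\varphi$, and then use the BBM limit on the smooth side together with $v_k\to v$ in $L^{p_i}$; but this chain of reasoning is the heart of the lemma, not a remark. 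A second point that needs care is your density reduction: replacing $u$ by $u_\delta\in C^\infty_c$ requires controlling $[u-u_\delta]_{s_{k,i},p_i,i}$ \emph{uniformly in $k$} as $s_{k,i}\uparrow 1$, which again is a BBM-type estimate and not free. The remaining ingredients (coordinate splitting, H\"older pairing, the $|h|\le 1$ versus $|h|>1$ dichotomy, tracking the $(1-s_{k,i})$ normalization) are routine once those two points are settled.
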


Hence we can state and prove the following theorem.
\begin{thm}
Let $\{\s_k\}_{k\in\N}$ be a sequence of fractional parameters $\s_k\to (1,\dots,1)$ as $k\to\infty$. Let $\p=(p_1,\dots,p_n)$ be such that $1<p_i<\infty$ for each $i=1,\dots, n$. Let $\lambda_k\in \Sigma^{\s_k,\p}$ be an eigenvalue of \eqref{Ps} such that $\lambda_k\to \lambda$ as $k\to\infty$. Then $\lambda\in \Sigma^\p$. Moreover, if $u_k\in W^{\s_k,\p}_0(\Omega)$ is a normalized eigenfunction associated to $\lambda_k$ then any $L^\p(\Omega)-$accumulation point $u$ of the sequence $\{u_k\}_{k\in\N}$, satisfy that $u\in W^{1,\p}_0(\Omega)$ and is an eigenfunction associated to $\lambda$.
\end{thm}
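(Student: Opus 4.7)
The plan is to mirror the strategy used earlier for the closedness of $\Sigma^{\p}$, augmented with a Minty-type monotonicity trick that transfers the Euler-Lagrange identity from the fractional setting to the local one.

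First I would exploit the normalization $\|u_k\|_{\p} = 1$. Testing the weak formulation of \eqref{Ps} for $u_k$ against $u_k$ itself yields $[u_k]_{\s_k,\p} = \lambda_k$, because the construction of $\LL_{\s_k,\p}$ (with the inner normalization by $[u_k]_{s_{k,i},p_i,i}$) produces $\langle -\LL_{\s_k,\p} u_k, u_k\rangle = [u_k]_{\s_k,\p}$, while $\langle \F_{\p}(u_k), u_k\rangle = \|u_k\|_{\p}$ by the $1$-homogeneity of $I$. Hence $\|u_k\|_{\s_k,\p} = 1 + \lambda_k$ is uniformly bounded. Proposition \ref{stability.semilinal}, applied to a further subsequence of the one realizing the accumulation point $u$, yields some $\tilde u \in W^{1,\p}_0(\Omega)$ with $u_k \to \tilde u$ in $L^{\p}(\Omega)$ and $\|\nabla \tilde u\|_{\p} \le \liminf [u_k]_{\s_k,\p} = \lambda$; uniqueness of $L^{\p}$-limits forces $\tilde u = u$, so $u \in W^{1,\p}_0(\Omega)$, $\|u\|_{\p} = 1$ (hence $u \not\equiv 0$), and $\|\nabla u\|_{\p} \le \lambda$.

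Next I would identify the Euler-Lagrange equation by a monotonicity/Minty argument. Because $-\LL_{\s_k,\p}$ is the Fr\'echet derivative of the convex functional $[\,\cdot\,]_{\s_k,\p}$, it is monotone. Fixing any $w \in C^\infty_c(\Omega)$ (which lies in both $W^{\s_k,\p}_0(\Omega)$ and $W^{1,\p}_0(\Omega)$), monotonicity combined with the weak formulation tested against $w$ gives
\begin{equation*}
\lambda_k - \lambda_k \langle \F_{\p}(u_k), w\rangle \;\ge\; \langle -\LL_{\s_k,\p} w, u_k - w\rangle.
\end{equation*}
The left-hand side converges to $\lambda - \lambda\langle \F_{\p}(u), w\rangle$ by the continuity of $\F_{\p}$ (Lemma \ref{Fp.bien.def}), while the right-hand side converges to $\langle -\LL_{\p} w, u - w\rangle$ by Lemma \ref{lema.clave}, applied with the fixed ``smooth'' argument $w \in W^{1,\p}_0(\Omega)$ and the varying test $u_k - w$, which is bounded in $W^{\s_k,\p}$ and converges to $u - w$ in $L^{\p}$. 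Using $\langle \F_{\p}(u), u\rangle = \|u\|_{\p} = 1$, the limiting inequality rearranges to
\begin{equation*}
\langle -\LL_{\p} w, u - w\rangle \;\le\; \lambda \langle \F_{\p}(u), u - w\rangle \qquad \text{for every } w \in C^\infty_c(\Omega),
\end{equation*}
and by density for every $w \in W^{1,\p}_0(\Omega)$. Minty's trick, choosing $w = u \mp tv$ with $v \in W^{1,\p}_0(\Omega)$ and $t > 0$, dividing by $t$, letting $t \to 0^+$, and invoking the continuity of $\LL_{\p}$ from Theorem \ref{Frechet.dif}, upgrades this to the equality $\langle -\LL_{\p} u, v\rangle = \lambda \langle \F_{\p}(u), v\rangle$, which is exactly the weak formulation of \eqref{P}. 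Thus $u$ is an eigenfunction associated to $\lambda$ and $\lambda \in \Sigma^{\p}$.

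The main obstacle I foresee is verifying the hypotheses of Lemma \ref{lema.clave} in this setting, in particular the uniform bound $\sup_k [w]_{\s_k,\p} < \infty$ for smooth $w$ as $\s_k \to (1,\dots,1)$, together with carefully justifying the Minty passage, which depends on the continuity of $\LL_{\p}$ as a map from $W^{1,\p}_0(\Omega)$ into its dual.
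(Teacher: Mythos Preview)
Your proposal is correct and follows essentially the same route as the paper: after establishing the uniform bound $\|u_k\|_{\s_k,\p}\le 1+\lambda_k$ and invoking Proposition~\ref{stability.semilinal}, the paper simply writes ``using Lemma~\ref{lema.clave}, the proof follows by using a classical monotonicity argument'', and the Minty trick you spell out (monotonicity of $-\LL_{\s_k,\p}$, pass to the limit with $w$ fixed, then set $w=u\mp tv$ and let $t\to 0^+$) is precisely that argument made explicit. The concern you flag about $\sup_k [w]_{\s_k,\p}<\infty$ for $w\in C_c^\infty(\Omega)$ is legitimate but is exactly the Bourgain--Brezis--Mironescu type estimate already underlying Proposition~\ref{stability.semilinal} and Lemma~\ref{lema.clave} in \cite{CERESABONDER23}, so it poses no real obstacle.
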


\begin{proof}
Let $\{\s_k\}_{k\in\N}$ be sequence of fractional parameters such that $\s_k\to (1,\dots,1)$ as $k\to\infty$ and et $\{\lambda_{\s_k}\}_{k\in\N}$ be a sequence of eigenvalues that converge to $\lambda$ as $k\to \infty$. For each $\lambda_{\s_k}$ there is a eigenfunction $u_k$, that we can assume to be $L^{\p}$-normalized.
	
Note that since $\|u_k\|_\p = 1$ and $\lambda_{\s_k}$ is convergent, and therefore bounded, the sequence $\{u_k\}_{k\in\N}$ satisfy \eqref{cota_uk}. Therefore we can apply Proposition \ref{stability.semilinal} and obtain a subsequence, that we still denote $\{u_k\}_{k\in\N}$ and a function $u\in W^{1,\p}_0(\Omega)$ such that $u_k\to u$ in $L^{\p}(\Omega)$.

Now, using Lemma \ref{lema.clave}, the proof follows by using a classical monotonicity argument.
\end{proof}

\section{Existence of eigenvalues}\label{section5}
In this section, we establish the existence of eigenvalues using the Ljusternik-Schnirelman theory. This is the main part of the article.

First we recall some abstract results from critical point theory that will be essential in our proof of the existence of eigenvalues.

Let $X$ be a reflexive Banach space and maps $\phi,\psi \in C^1(X,\R)$. We will assume that $\phi, \psi$ verify the assumptions (H1)--(H4) below:
\begin{enumerate}
\item[(H1)] $ \phi,\psi \in C^1(X,\R)$, are even maps with $ \phi(0)=\psi(0)=0$ and the level set 
$$
{\mathcal M}=\{u\in X :\psi(u)=1\}
$$ 
is bounded. \label{H1}
		
\item[(H2)] $ \phi'$ is completely continuous. Moreover, for any $u\in X$ it holds that 
$$
\langle  \phi'(u),u \rangle=0 \iff  \phi(u)=0,
$$ 
where $\langle \cdot ,\cdot \rangle$ denote the duality brackets for the pair $(X,X^*).$

\item[(H3)] $\psi'$ is continuous, bounded and, as $k\to\infty$, it holds that 
$$
u_k\rightharpoonup u, \psi'(u_k)\rightharpoonup v \text{ and }\langle \psi'(u_k),u_k\rangle \to \langle v,u\rangle \Rightarrow u_k\to u\text{ in } X. 
$$

\item[(H4)] For every $u\in X\setminus \{0\}$ it holds that
$$
\langle \psi'(u),u\rangle>0,\ \lim_{t\to\infty}\psi(tu)=\infty \text{ and } \inf_{u\in M}\langle \psi'(u),u\rangle>0.
$$		
\end{enumerate}

Now, for any $n\in\N$, we define
$$
{\mathcal K}_n =\{K\subset \mathcal M\colon K \text{ is symmetric, compact, with } \phi|_K>0 \text{ and } \gamma(K)\ge n\},
$$
where $\gamma(K)$ is the Krasnoselskii genus of the set $K$.

Finally, let
$$
c_n = \begin{cases}
\sup_{K\in{\mathcal K}_n} \min_{u\in K} \phi(u) & \text{if } {\mathcal K}_n\neq\emptyset\\
0 & \text{if } {\mathcal K}_n=\emptyset
\end{cases}
$$

The following general abstract result is proved in \cite{ZEIDLER} (see also \cite[Theorem 9.27]{MOTREANU14}).
\begin{thm}\label{prop-LS}
Let $X$ be a reflexive Banach space and $\phi, \psi \in C^1(X,\R)$. Assume that $\phi, \psi$ satisfy {\em (H1)--(H4)}. Then
\begin{enumerate}
\item $c_1 <\infty$ and $c_n\to 0$ as $n\to\infty$.

\item If $c=c_n >0$, then we can find an element $u\in \mathcal M$ that is a solution of
\begin{equation}\label{eq.abstracta}
\mu \psi'(u)=\phi'(u),\quad (\mu, u)\in \R\times{\mathcal M},
\end{equation}
for an eigenvalue $\mu\neq0$ and such that $\phi(u) = c$.

\item More generally, if $c = c_n = c_{n+k} > 0$ for some $k \ge 0$, then the set of solutions $u \in{\mathcal M}$ of \eqref{eq.abstracta} such that $\phi(u)=c$ has genus $\ge  k + 1$.

\item If $c_n > 0$ for all $n \ge1$, then there is a sequence $\{(\mu_n,u_n)\}_{n\in\N}$ of solutions of \eqref{eq.abstracta} with $\phi(u_n)=c_n$, $\mu_n\neq0$ for all $n\ge 1$, and $\mu_n\to 0$ as $n\to\infty$.

\item If we further require that 
$$
\langle \phi'(u),u\rangle=0 \text{ if and only if } \phi(u)=0 \text{ if and only if } u=0,
$$
then, $c_n > 0$ for all $n \ge 1$, and there is a sequence $\{(\mu_n,u_n)\}_{n\in\N}$ of solutions of \eqref{eq.abstracta} such that $\phi(u_n) = c_n$, $\mu_n\neq 0$, $\mu_n\to 0$, and $u_n \rightharpoonup 0$ in $X$ as $n\to\infty$.
\end{enumerate}
\end{thm}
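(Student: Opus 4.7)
The plan is to apply the classical Ljusternik--Schnirelmann minimax scheme to $\phi|_{\mathcal M}$, combining the Krasnoselskii genus with a constrained deformation lemma. First I would check that ${\mathcal M} = \{\psi = 1\}$ is a $C^1$ symmetric submanifold of $X$: by (H4), $\langle\psi'(u),u\rangle > 0$ on ${\mathcal M}$, so $\psi'(u)\neq 0$ there, $1$ is a regular value of $\psi$, and Lagrange multipliers give that $u\in{\mathcal M}$ is a critical point of $\phi|_{\mathcal M}$ if and only if $\phi'(u) = \mu\psi'(u)$ with $\mu = \langle\phi'(u),u\rangle / \langle\psi'(u),u\rangle$. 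This identifies equation \eqref{eq.abstracta} with the Euler--Lagrange condition for the constrained problem.

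The second step is to establish a Palais--Smale condition at every level $c\neq 0$. Take $\{u_k\}\subset{\mathcal M}$ with $\phi(u_k)\to c$ and $\phi'(u_k) - \mu_k\psi'(u_k)\to 0$ in $X^*$. Since ${\mathcal M}$ is bounded by (H1) and $X$ is reflexive, up to a subsequence $u_k\rightharpoonup u$; by (H2), $\phi'(u_k)\to \phi'(u)$ strongly in $X^*$. Using the positivity $\inf_{{\mathcal M}}\langle\psi'(u),u\rangle > 0$ from (H4), the multipliers $\mu_k$ stay bounded and converge to some $\mu$, hence $\psi'(u_k)$ converges in $X^*$ and pairing with $u_k$ yields $\langle\psi'(u_k),u_k\rangle\to \mu^{-1}\langle\phi'(u),u\rangle$. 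Hypothesis (H3) then upgrades the weak convergence of $u_k$ to strong convergence in $X$, closing the (PS) condition.

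With the manifold structure and (PS) secured, I would run the standard minimax arguments. Finiteness $c_1 < \infty$ follows from the boundedness of ${\mathcal M}$ and continuity of $\phi$, while $c_n\to 0$ is obtained from the complete continuity of $\phi'$ in (H2) through a finite-dimensional approximation argument: large-genus symmetric compact subsets of a bounded set cannot sustain a uniform positive lower bound on $\phi$ without contradicting compactness of $\phi'$. Items (2) and (3) are proved by contradiction via the constrained deformation lemma: if $c = c_n$ is not a critical value (or, for (3), the critical set $Z_c$ at level $c$ has genus $\le k$), the (PS) condition yields an odd continuous flow $\eta$ on ${\mathcal M}$ that strictly decreases $\phi$ below $c$ outside an arbitrarily small symmetric neighborhood $N$ of $Z_c$ with $\gamma(N)\le k$; applying $\eta$ to a near-optimal $K\in{\mathcal K}_n$ (respectively ${\mathcal K}_{n+k}$) and using subadditivity $\gamma(K)\le \gamma(\overline{K\setminus N}) + \gamma(N)$ produces an admissible set with a strictly larger minimum of $\phi$, contradicting the definition of $c_n$. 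Item (4) then follows by combining (2) with $c_n\to 0$ and the Lagrange formula $\mu_n = \langle\phi'(u_n),u_n\rangle/\langle\psi'(u_n),u_n\rangle$, using (H2) and (H4) to control the quotient; for (5), the additional nondegeneracy forces $c_n > 0$ for every $n$, and boundedness of $\{u_n\}\subset{\mathcal M}$ together with $\mu_n\to 0$ forces $\phi'(u_n)\to 0$ in $X^*$, whence any strong accumulation point would be a zero of $\phi'$, forcing $\phi = 0$, forcing $u = 0$; this gives $u_n\rightharpoonup 0$.

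The main obstacle will be the construction of the odd, tangent, locally Lipschitz pseudo-gradient vector field for $\phi|_{\mathcal M}$ needed to drive the deformation. Away from critical points one uses partitions of unity and the tangential projection
$$
v \mapsto v - \frac{\langle \psi'(u), v\rangle}{\langle \psi'(u), u\rangle}\, u,
$$
and (H3)--(H4) are exactly what is needed to make this projection well-defined and Lipschitz on bounded sets: boundedness of $\psi'$ in (H3) keeps numerators controlled while the uniform positivity $\inf_{{\mathcal M}}\langle\psi'(u),u\rangle > 0$ in (H4) keeps denominators bounded below. Once this pseudo-gradient is in place, the Banach-space deformation lemma in Zeidler's treatment applies verbatim, and all five conclusions follow from the genus-minimax book-keeping described above.
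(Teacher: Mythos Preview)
The paper does not actually prove this theorem: immediately before the statement it reads ``The following general abstract result is proved in \cite{ZEIDLER} (see also \cite[Theorem 9.27]{MOTREANU14}),'' and no argument is given. Your sketch reconstructs the standard Ljusternik--Schnirelmann proof found in those references (manifold structure of ${\mathcal M}$ via (H4), a constrained Palais--Smale condition from (H2)--(H3), an odd pseudo-gradient flow and the deformation lemma, then genus bookkeeping), so it is consistent with the cited sources but goes well beyond anything the paper itself supplies. One small point worth tightening in your outline: when you verify (PS) at level $c\neq 0$, you need to rule out $\mu=0$ before dividing by it; this follows because complete continuity of $\phi'$ makes $\phi$ weakly sequentially continuous on the bounded set ${\mathcal M}$, so $\phi(u)=c\neq 0$, and then (H2) gives $\langle\phi'(u),u\rangle\neq 0$, forcing $\mu\neq 0$.
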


Now we will apply Theorem \ref{prop-LS} in the case where $X=W^{1,\p}_0(\Omega)$, $(\phi, \psi)=(I,H)$ and in the case where $X=W^{\s,\p}_0(\Omega)$ and $(\phi,\psi) = (I, H_{\s})$, where the operators $I, H$ and $H_{\s}$ where introduced in \eqref{HI} and \eqref{Hs}.

For enhanced readability of the work, we will demonstrate the properties of $H$ and $H_{\s}$ through a series of lemmas.
\begin{lema}\label{H'monotone}
Let $H\colon W^{1,\p}_0(\Omega)\to\R$ be defined in \eqref{HI} and $H_{\s}\colon W^{\s,\p}_0(\Omega)\to\R$ be defined in \eqref{Hs}. 

Then $H'\colon W^{1,\p}_0(\Omega)\to W^{-1,\p'}(\Omega)$ and $H_{\s}'\colon W^{\s,\p}_0(\Omega)\to W^{-\s,\p'}(\Omega)$ are bounded and monotone.
\end{lema}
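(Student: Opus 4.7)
The plan is to handle both statements in parallel, since the non-fractional and fractional operators share the same ``normalized'' structure, where the denominators $\|u_{x_i}\|_{p_i}$ and $[u]_{s_i,p_i,i}$ play analogous roles. For clarity I will spell out the argument for $H'=\LL_{\p}$, and only indicate the (verbatim) modifications needed for $H_{\s}'=\LL_{\s,\p}$.

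For boundedness, starting from formula \eqref{eq.H'} in Lemma \ref{I.diferenciable}, I would apply H\"older's inequality with exponents $p_i/(p_i-1)$ and $p_i$ on each summand:
$$
|\langle H'(u), v\rangle| \le \sum_{i=1}^n \left(\int_{\R^n} \left|\frac{u_{x_i}}{\|u_{x_i}\|_{p_i}}\right|^{p_i}\, dx\right)^{\frac{p_i-1}{p_i}}\|v_{x_i}\|_{p_i}.
$$
The crucial observation is that by construction $u_{x_i}/\|u_{x_i}\|_{p_i}$ has unit $L^{p_i}$-norm, so the first factor is $1$, giving $|\langle H'(u),v\rangle|\le \|\nabla v\|_{\p}\le \|v\|_{1,\p}$. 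In particular, $H'$ is uniformly bounded, not merely bounded on bounded sets. The same computation works for $H_{\s}'$ after replacing $u_{x_i}$ by $D^{s_i,i}_h u$ and Lebesgue measure by $\tfrac{dh}{|h|}dx$ in the outer integral.

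For monotonicity, let $a_i := \|u_{x_i}\|_{p_i}$, $b_i := \|v_{x_i}\|_{p_i}$, $\tilde u_i := u_{x_i}/a_i$ and $\tilde v_i := v_{x_i}/b_i$. Expanding $\langle H'(u)-H'(v), u-v\rangle$ coordinate by coordinate and using $\int|\tilde u_i|^{p_i}dx=\int|\tilde v_i|^{p_i}dx=1$, a direct computation collapses to
$$
\langle H'(u) - H'(v), u - v\rangle = \sum_{i=1}^n\left(a_i + b_i - a_i K_i - b_i J_i\right),
$$
where $J_i=\int|\tilde u_i|^{p_i-2}\tilde u_i\tilde v_i\, dx$ and $K_i=\int|\tilde v_i|^{p_i-2}\tilde v_i\tilde u_i\, dx$. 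A single application of H\"older's inequality gives $|J_i|, |K_i| \le 1$, so each summand is nonnegative, yielding monotonicity.

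The anticipated difficulty is mostly bookkeeping: one must check that the fractional analogue of this expansion goes through with no surprises, writing $D^{s_i,i}_h u = a_i \tilde U_i$ (with $\tilde U_i$ of unit norm in $L^{p_i}(\R^n\times\R;\tfrac{dh}{|h|}dx)$) and applying H\"older in that product measure. Beyond that, the lemma is essentially algebraic, and no compactness or approximation arguments are needed.
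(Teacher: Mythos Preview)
Your proposal is correct and follows essentially the same approach as the paper: boundedness via H\"older using that each normalized derivative has unit $L^{p_i}$-norm, and monotonicity by expanding $\langle H'(u)-H'(v),u-v\rangle$ and using the key identities $\langle H'(u),u\rangle=\|\nabla u\|_{\p}$, $\langle H'(v),v\rangle=\|\nabla v\|_{\p}$ together with the boundedness estimate on the cross terms. The only cosmetic difference is that you unpack the cross terms coordinate-by-coordinate via the quantities $J_i,K_i$ with $|J_i|,|K_i|\le 1$, whereas the paper bounds $\langle H'(u),v\rangle+\langle H'(v),u\rangle\le \|\nabla v\|_{\p}+\|\nabla u\|_{\p}$ in one line; these are the same inequality.
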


\begin{proof}
Let us first demonstrate that $H'$ is bounded. In fact
\begin{align*}
|\langle H'(u),v\rangle|& =\left|\sum_{i=1}^{n}\frac{1}{\|u_{x_i}\|_{p_i}^{p_i-1}}\int_{\R} |u_{x_i}|^{p_i-2}u_{x_i}v_{x_i}\,dx\right|\\
&\leq \sum_{i=1}^{n}\frac{1}{\|u_{x_i}\|_{p_i}^{p_i-1}}\int_{\R} |u_{x_i}|^{p_i-1}|v_{x_i}|\,dx\\
&\leq\sum_{i=1}^{n}\frac{\|u_{x_i}\|_{p_i}^{p_i/p_i'}\|v_{x_i}\|_{p_i}}{\|u_{x_i}\|_{p_i}^{p_i-1}}\\
&\leq \sum_{i=1}^{n}\|v_{x_i}\|_{p_i}=\|\nabla v\|_{\p}.
\end{align*}
Therefore, $H'$ is a bounded operator. Moreover, from this result we can obtain the monotonicity of it.
$$
\langle H'(u),v\rangle + \langle H'(v),u\rangle \leq \left|\langle  H'(u),v\rangle\right| + \left|\langle H'(v),u\rangle\right|\leq \|\nabla v\|_{\p}+\|\nabla u\|_{\p}.
$$ 
Hence,
\begin{align*}
\langle H'(u)-H'(v),u-v\rangle&=\langle H'(u),u\rangle+\langle H'(v),v\rangle-\left(\langle H'(u),v\rangle+\langle H'(v),u\rangle\right)\\
&=\|\nabla u\|_{\p}+\|\nabla v\|_{\p}-\left(\langle H'(u),v\rangle+\langle H'(v),u\rangle\right)\geq 0.
\end{align*}
The proof for $H'$ is complete.
	
The proof for $H_{\s}'$ is analogous and the details are left to the reader.
\end{proof}

\begin{lema}\label{lemma1}
The operators $H$ and $H_{\s}$ given in \eqref{HI} and \eqref{Hs} respectively, verify hypothesis {\em (H3)}. 

That is $H'\colon W^{1,\p}_0(\Omega)\to W^{-1,\p'}(\Omega)$ and $H_{\s}'\colon W^{\s,\p}_0(\Omega)\to W^{-\s,\p'}(\Omega)$ are continuous and bounded operator, and moreover, as $k\to\infty$, it holds that 
\begin{align}\label{H31}
& u_k\rightharpoonup u,\ H'(u_k)\rightharpoonup v \text{ and }\langle H'(u_k),u_k\rangle \to \langle v,u\rangle \Rightarrow u_k\to u\text{ in } W^{1,\p}_0(\Omega)\\
\label{H32}
& u_k\rightharpoonup u,\ H_{\s}'(u_k)\rightharpoonup v \text{ and }\langle H_{\s}'(u_k),u_k\rangle \to \langle v,u\rangle \Rightarrow u_k\to u\text{ in } W^{\s,\p}_0(\Omega).
\end{align} 
\end{lema}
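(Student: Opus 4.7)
The plan is as follows. Continuity and boundedness of the operators $H'$ and $H_{\s}'$ have already been settled: boundedness is exactly the content of Lemma \ref{H'monotone}, and continuity of $H'=\LL_{\p}$ follows from Theorem \ref{Frechet.dif} (the fractional case is analogous). Hence what remains is to prove \eqref{H31} and \eqref{H32}, and I will concentrate on \eqref{H31}; the fractional case is structurally identical.

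The key observation is that $H(w)=\|\nabla w\|_{\p}$ is positively $1$-homogeneous, so Euler's identity gives $\langle H'(w),w\rangle=H(w)$ for every $w\neq 0$. Therefore the assumption $\langle H'(u_k),u_k\rangle\to\langle v,u\rangle$ translates into $H(u_k)\to\langle v,u\rangle$. Weak lower semicontinuity of the norm $H$ yields $H(u)\le\liminf_k H(u_k)=\langle v,u\rangle$. For the reverse inequality I would invoke the sharp bound $|\langle H'(w),\varphi\rangle|\le H(\varphi)$ obtained in the proof of Lemma \ref{H'monotone}: taking $w=u_k$, $\varphi=u$, and passing to the limit via $H'(u_k)\rightharpoonup v$, this yields $|\langle v,u\rangle|\le H(u)$. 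Since $H(u_k)\ge 0$ the limit $\langle v,u\rangle$ is nonnegative, so $\langle v,u\rangle=H(u)$ and consequently $H(u_k)\to H(u)$.

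Writing $a_i^k:=\|(u_k)_{x_i}\|_{p_i}$ and $a_i:=\|u_{x_i}\|_{p_i}$, one has $\sum_i a_i^k\to \sum_i a_i$, while weak lower semicontinuity of each $L^{p_i}$-norm gives $a_i\le\liminf_k a_i^k$. Passing to a subsequence along which each $a_i^k$ converges (possible by the boundedness of $\{u_k\}$ in $W^{1,\p}_0(\Omega)$ and a diagonal extraction), the limits $\bar a_i$ satisfy $\bar a_i\ge a_i$ and $\sum_i\bar a_i=\sum_i a_i$, forcing $\bar a_i=a_i$ for every $i$. Since $(u_k)_{x_i}\rightharpoonup u_{x_i}$ in $L^{p_i}$ and $\|(u_k)_{x_i}\|_{p_i}\to\|u_{x_i}\|_{p_i}$, the uniform convexity of $L^{p_i}$ (the Radon--Riesz / Kadets--Klee property) gives $(u_k)_{x_i}\to u_{x_i}$ strongly in $L^{p_i}$ along the subsequence. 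Combined with $u_k\to u$ in $L^{\p}(\Omega)$ (from the Rellich--Kondrachov embedding), this delivers $u_k\to u$ in $W^{1,\p}_0(\Omega)$, and the standard every-subsequence-has-a-subsequence argument promotes the conclusion to the full sequence.

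For \eqref{H32} the same scheme applies verbatim with $\|(u_k)_{x_i}\|_{p_i}$ replaced by $[u_k]_{s_i,p_i,i}$. Each seminorm $[\,\cdot\,]_{s_i,p_i,i}$ is the norm of the difference $u(x+he_i)-u(x)$ in the space $L^{p_i}\bigl(\R^n\times\R,\,|h|^{-1-s_ip_i}\,dh\,dx\bigr)$, and the $1$-homogeneity of $H_{\s}$ together with the analog of Lemma \ref{H'monotone} give $\langle H_{\s}'(w),w\rangle=H_{\s}(w)$ and $|\langle H_{\s}'(w),\varphi\rangle|\le H_{\s}(\varphi)$; the same Radon--Riesz argument then yields $[u_k-u]_{s_i,p_i,i}\to 0$ for each $i$, hence strong convergence in $W^{\s,\p}_0(\Omega)$. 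The main obstacle in this plan is the extraction of $\langle v,u\rangle\le H(u)$ purely from weak convergence; this is the step that depends crucially on the $1$-homogeneous normalization $u_{x_i}/\|u_{x_i}\|_{p_i}$ built into the definition of $\LL_{\p}$, which is precisely what makes the tight dual estimate $|\langle H'(w),\varphi\rangle|\le H(\varphi)$ available.
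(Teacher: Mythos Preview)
Your argument is correct and takes a genuinely different route from the paper. The paper proceeds via the classical Minty trick: from the monotonicity established in Lemma~\ref{H'monotone} one passes to the limit in $0\le\langle H'(w)-H'(u_k),\,w-u_k\rangle$, sets $w=u+tz$, divides by $t$ and sends $t\to0^+$ to identify $v=H'(u)$; then $\|\nabla u_k\|_{\p}=\langle H'(u_k),u_k\rangle\to\langle H'(u),u\rangle=\|\nabla u\|_{\p}$, and the conclusion is drawn by invoking uniform convexity of $W^{1,\p}_0(\Omega)$ globally. Your approach instead exploits the $1$-homogeneity of $H$ together with the sharp dual bound $|\langle H'(w),\varphi\rangle|\le H(\varphi)$ (from the proof of Lemma~\ref{H'monotone}) to obtain $H(u_k)\to H(u)$ directly, without ever identifying $v$, and then upgrades to strong convergence coordinate by coordinate via Radon--Riesz in each $L^{p_i}$. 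Each approach has its advantage: the Minty argument is more robust in that it only needs monotonicity of $H'$ and would survive replacing $H$ by a non-homogeneous functional, whereas your componentwise Radon--Riesz step is cleaner at the end, since it avoids appealing to uniform convexity of the full space under the sum-of-norms topology (a statement which, read literally, is delicate: finite $\ell^1$-type sums of uniformly convex norms need not be uniformly convex).
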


\begin{proof}
In view of Lemma \ref{H'monotone} it remains to see that $H'$ and $H_{\s}'$ are continuous and that verify \eqref{H31} and \eqref{H32} respectively.

First we claim that $H'$ is a continuous operator. In fact just observe that we can rewrite $H'$ as
$$
H'(u) = \sum_{i=1}^n \frac{J_{p_i}(u_{x_i})}{\|u_{x_i}\|_{p_i}^{p_i-1}},
$$
where $J_p(u) := |u|^{p-2}u$.
	
Since $J_p\colon L^p(\R^n)\to L^{p'}(\R^n)$ is continuous, the claim follows

To verify \eqref{H31} let  $\{u_k\}_{k\in\N}$ be a sequence in $W^{1,\p}_0(\Omega)$ such that $u_k \rightharpoonup u$ in $W^{1,\p}_0(\Omega)$, $ H'(u_k)\rightharpoonup v$ in $W^{-1,\p'}(\Omega)$ and $\langle H'(u_k),u_k\rangle\to\langle v,u\rangle$ then we need to show that  $u_k\to u $ in $W^{1,\p}_0(\Omega).$

Given $w\in W^{1,\p}_0(\Omega)$ arbitrary, by the monotonicity of $H'$ (Lemma \ref{H'monotone}) we get that
$$
0\leq \langle H'(w) - H'(u_k), w - u_k\rangle. 
$$
Taking the limit as $k\to\infty$, we arrive at	
$$
0\leq \langle H'(w),w - u\rangle- \langle v, w - u\rangle.
$$
Now we can take $w=u+tz$ with $t>0$ and we find that
$$
0\leq \langle H'(u+tz)-v,tz\rangle 
$$
Dividing by $t$ and taking limit $t\to0+$ we get that for all $z\in W^{1,\p}_0(\Omega)$,
$$
0\leq \langle H'(u)-v,z\rangle. 
$$
Therefore $H'(u)=v$. Moreover 
$$
\|\nabla u_k\|_{\p} = \langle H'(u_k),u_k\rangle \to \langle v,u\rangle = \langle H'(u),u\rangle = \|\nabla u\|_{\p}.
$$ 
	
As the space $W^{1,\p}_0(\Omega)$ is uniformly convex, weak convergence and norm convergence implies that $u_k\to u$ strongly as desired.
	
The proof for $H_{\s}'$ is analogous.
\end{proof}

The upcoming theorem is the main important result of this section, as it assures the existence of eigenvalues.

\begin{thm}\label{existence.eigenvalues}
There exist a sequence $\{u_k\}_{k\in \N}\subset W^{1,\p}_0(\Omega)$ of critical points of $\Q_{\p}$ with associated critical values $\{\lambda_k\}_{k\in \N}\subset \R$ such that $\lambda_k\to\infty$ as $k\to\infty$. Moreover, these critical values have the following variational characterization  
\begin{equation}\label{variational characterization}
\lambda_k=\inf_{K\in {\mathcal K}_k}\sup_{u\in K} H(u)
\end{equation}
where, for any $k\in\N$
$$
{\mathcal K}_k=\{K\subset M \text{ compact, symmetric with } H'(u)>0 \text{ on } K \text{ and } \gamma(K)\geq k\}
$$
$$
M=\{u\in W_0^{1,\p}(\Omega): I(u)=1\}
$$
and $\gamma$ is the \textit{Krasnoselskii} genus of $K$.

In particular, $u_k$ is a weak solution to \eqref{P} with eigenvalue $\lambda_k$.
\end{thm}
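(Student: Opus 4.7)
The strategy is to invoke the abstract Ljusternik-Schnirelman theorem (Theorem~\ref{prop-LS}) with the choices $X = W^{1,\p}_0(\Omega)$, $\phi = I$, and $\psi = H$. The constraint manifold becomes $\mathcal{M} = \{u \in X : H(u) = 1\}$ and the abstract Euler-Lagrange equation $\mu \psi'(u) = \phi'(u)$ is precisely $\mu\, \LL_\p u = \F_\p(u)$. Since both $H$ and $I$ are $1$-homogeneous, their derivatives $H' = \LL_\p$ and $I' = \F_\p$ are $0$-homogeneous; consequently the eigenvalue relation is preserved under rescaling, and a solution $(\tilde u, \mu)$ on $\mathcal{M}$ rescales to $u := \tilde u / I(\tilde u) \in M = \{I = 1\}$, a weak solution of~\eqref{P} with eigenvalue $\lambda = 1/\mu$.

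Next I would verify hypotheses (H1)--(H4). Hypothesis (H1) follows from Theorem~\ref{Frechet.dif}, the evenness and vanishing at $0$ of $I$ and $H$, and the Poincar\'e inequality~\eqref{Poincare1}, which bounds $\|u\|_\p \leq C_1 H(u) = C_1$ on $\mathcal{M}$ and hence makes $\mathcal{M}$ norm-bounded in $W^{1,\p}_0(\Omega)$. Hypothesis (H3) for $\psi = H$ is precisely Lemma~\ref{lemma1}. Hypothesis (H4) together with the extra condition for part~(5) of Theorem~\ref{prop-LS} both reduce to Euler's identity for $1$-homogeneous functionals: $\langle H'(u), u\rangle = H(u)$ and $\langle I'(u), u\rangle = I(u)$, from which positivity on $X \setminus \{0\}$, coercivity along rays, $\inf_{\mathcal{M}} \langle H'(u),u\rangle = 1 > 0$, and the kernel characterization $\langle I'(u), u\rangle = 0 \iff I(u)=0 \iff u = 0$ are all immediate.

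The only non-routine step is (H2), the complete continuity of $I' = \F_\p$. For this, if $u_k \rightharpoonup u$ in $W^{1,\p}_0(\Omega)$, then by the Rellich-Kondrashov Proposition $u_k \to u$ strongly in $L^\p(\Omega)$, and Lemma~\ref{Fp.bien.def} gives $\F_\p(u_k) \to \F_\p(u)$ in $L^{\p'}(\Omega)$, which embeds continuously into $W^{-1,\p'}(\Omega)$, as required.

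Applying parts~(1), (4) and (5) of Theorem~\ref{prop-LS} then yields a sequence $\{(\mu_k,\tilde u_k)\}_{k\in\N}$ with $\tilde u_k \in \mathcal{M}$, $\mu_k \neq 0$, $\mu_k \to 0$, and $I(\tilde u_k) = c_k$, where $c_k = \sup \min_K I$ over the abstract admissible class of compact, symmetric, genus $\geq k$ subsets of $\mathcal{M}$. Testing $\mu_k H'(\tilde u_k) = I'(\tilde u_k)$ against $\tilde u_k$ and using Euler's identity gives $\mu_k = I(\tilde u_k)/H(\tilde u_k) = c_k$, hence $\lambda_k = 1/c_k \to \infty$. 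To recover the characterization~\eqref{variational characterization}, I would employ the odd homeomorphism $\Phi : M \to \mathcal{M}$, $\Phi(v) = v/H(v)$, whose inverse is $u \mapsto u/I(u)$. This $\Phi$ preserves symmetry, compactness and the Krasnoselskii genus, and satisfies $I(\Phi(v)) = 1/H(v)$; hence $\min_{\Phi(K)} I = 1/\sup_K H$, the bijection $K \leftrightarrow \Phi(K)$ transports the two admissible classes onto each other, and we conclude $c_k = 1/\inf_K \sup_K H$, i.e.\ $\lambda_k = \inf_K \sup_K H$. The principal technical care is in this final genus-preserving bookkeeping between the two constraint manifolds $M$ and $\mathcal{M}$.
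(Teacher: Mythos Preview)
Your proof is correct and follows essentially the same approach as the paper: apply Theorem~\ref{prop-LS} with $\phi=I$, $\psi=H$, verify (H1)--(H4) via Lemmas~\ref{I.diferenciable}, \ref{Fp.bien.def}, \ref{lemma1} and the compact embedding, and then identify $\lambda_k = 1/\mu_k = 1/c_k$. Your treatment of the final step---passing from the abstract formula $c_k=\sup_K\min_K I$ on $\mathcal{M}=\{H=1\}$ to \eqref{variational characterization} on $M=\{I=1\}$ via the odd, genus-preserving homeomorphism $\Phi(v)=v/H(v)$---is in fact more carefully spelled out than the paper's, which simply asserts this conversion.
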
 

\begin{proof}
We must confirm that the functionals $I$ and $H$ satisfy the hypotheses of Theorem \ref{prop-LS}.

Note that conditions (H1) and (H3) are direct consequences of Lemmas \ref{I.diferenciable} and \ref{lemma1}, respectively. Condition (H4) follows directly from the definition of $H'(u)$.

In order to show that (H2) holds, just observe that if $u_k\rightharpoonup u \text{ in } W^{1,\p}_0(\Omega)$, by the compactness of the immersion $W^{1,\p}_0(\Omega)\subset\subset L^{\p}(\Omega)$, it follows that $u_k\to u$ in $L^{\p}(\Omega)$ and using Lemma \ref{Fp.bien.def}, we get that $I'(u_k)\to I'(u)$ in $L^{\p'}(\Omega)\subset W^{-1,\p'}(\Omega)$.

Finally observe that $\langle I'(u),u\rangle = I(u) = \|u\|_{\p}$. Therefore each one is zero if and only if $u=0$.

We then apply the Ljusternik-Schnirelman theory, Theorem \ref{prop-LS}, to the functionals $I$ and $H$ on the level set ${\mathcal M} = \{u\in W^{1,\p}_0(\Omega)\colon H(u)=1\}$.

 By Theorem \ref{prop-LS} there exist a sequence of numbers $\{\mu_k\}_{k\in\N}\searrow0$ and functions $\{u_k\}_{k\in\N}\in W^{1,\p}_0(\Omega)$ normalized such that $H(u_k)=1$, and
\begin{equation}\label{eq.abstracta2}
\mu_k \langle H'(u_k),v\rangle = \langle I'(u_k),v\rangle \quad \forall v \in W^{1,\p}_0(\Omega)
\end{equation}
and $I(u_k)=c_k$ with
\begin{equation}\label{ck}
c_k = \sup_{K\subset {\mathcal K_k}} \min_{u\in K} I(u).
\end{equation}
Using that $\langle H'(u),u\rangle = H(u)$ and $\langle I'(u),u\rangle = I(u)$ one immediately obtain that $c_k=\mu_k$.	 

So, if we denote $\lambda_k = \mu_k^{-1}$, using \eqref{eq.abstracta2}, we have that $u_k$ is a weak solution to \eqref{P} with eigenvalue $\lambda_k$	and from \eqref{ck} one also obtain the validity of \eqref{variational characterization}.
\end{proof}

\begin{rem}
The eigenvalues obtained in Theorem \ref{existence.eigenvalues} are commonly called the Ljusternik-Schnirelman eigenvalues or simply  the \textbf{LS-eigenvalues} and are denoted by $\Sigma^\p_{LS}$.
\end{rem}

Similarly, we state a Theorem for the fractional counterpart. The proof is a slight variation of Theorem \ref{existence.eigenvalues} and is omitted.

\begin{thm}\label{existence.eigenvaluess}
There exist a sequence $\{u_k^{\s}\}_{k\in \N}\subset W^{\s,\p}_0(\Omega)$ of critical points of $\Q_{\s,\p}$ with critical values $\{\lambda_k^{\s}\}_{k\in \N}\subset \R$ such that $\lambda_k^{\s}\to\infty$ as $k\to\infty$. Moreover, this critical values have the following variational characterization  
\begin{equation}\label{variational characterizations}
\lambda_k^{\s}=\inf_{K\in {\mathcal K}_k^{\s}}\sup_{u\in K} H_{\s}(u)
\end{equation}
where, for any $k\in\N$
$$
{\mathcal K}_k^{\s}=\{K\subset M_{\s} \text{ compact, symmetric with } H_{\s}'(u)>0 \text{ on } K \text{ and } \gamma(K)\geq k\}
$$
$$
M_{\s}=\{u\in W_0^{\s,\p}(\Omega): I(u)=1\}
$$
and $\gamma$ is the \textit{Krasnoselskii} genus of $K$.
\end{thm}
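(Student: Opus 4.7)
The plan is to mirror the proof of Theorem \ref{existence.eigenvalues}, replacing every ingredient from the local setting by its fractional analogue, and then to invoke the abstract Ljusternik-Schnirelman Theorem \ref{prop-LS} with $X = W^{\s,\p}_0(\Omega)$, $\phi = I$ and $\psi = H_\s$, where $I$ and $H_\s$ are defined in \eqref{HI} and \eqref{Hs}.

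First I would verify the four hypotheses. For (H1), evenness and $I(0)=H_\s(0)=0$ are immediate from the definitions. The $C^1$ regularity of $I$ is already contained in Theorem \ref{Frechet.dif}, and the $C^1$ regularity of $H_\s$ follows from Lemma \ref{lemaf} together with the continuity of $H_\s' = \LL_{\s,\p}$, which is analogous to the continuity of $\LL_{\p}$ used in Theorem \ref{Frechet.dif}. The level set $\mathcal{M}_\s = \{H_\s(u) = 1\} = \{[u]_{\s,\p}=1\}$ is bounded in $W^{\s,\p}_0(\Omega)$ because by the fractional Poincar\'e inequality \eqref{Poincare2} one has $\|u\|_{\s,\p} \le (1+C_2)\,[u]_{\s,\p} = 1+C_2$ on $\mathcal{M}_\s$.

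For (H2) I would use the compact embedding $W^{\s,\p}_0(\Omega) \subset\subset L^{\p}(\Omega)$ given by the fractional Rellich-Kondrashov result, combined with the continuity of $I' = \F_\p \colon L^{\p}(\R^n)\to L^{\p'}(\R^n)$ proved in Lemma \ref{Fp.bien.def}: if $u_k \rightharpoonup u$ in $W^{\s,\p}_0(\Omega)$ then $u_k \to u$ in $L^\p(\Omega)$, hence $I'(u_k)\to I'(u)$ in $L^{\p'}(\Omega) \hookrightarrow W^{-\s,\p'}(\Omega)$. Moreover $\langle I'(u),u\rangle = I(u) = \|u\|_{\p}$, so $\langle I'(u),u\rangle=0$ iff $I(u)=0$ iff $u=0$. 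Hypothesis (H3) is precisely the content of Lemma \ref{lemma1}, statement \eqref{H32}. For (H4), on $\mathcal{M}_\s$ one has $\langle H_\s'(u),u\rangle = H_\s(u) = 1$, and $H_\s$ is positively $1$-homogeneous, so $\lim_{t\to\infty} H_\s(tu) = \infty$ for any $u\neq 0$ and $\inf_{\mathcal{M}_\s} \langle H_\s'(u),u\rangle = 1 > 0$.

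With (H1)--(H4) and the extra nondegeneracy $\langle I'(u),u\rangle=0\Leftrightarrow I(u)=0\Leftrightarrow u=0$ in hand, item (5) of Theorem \ref{prop-LS} gives a sequence $\{\mu_k^\s\}\searrow 0$ and associated solutions $u_k^\s \in \mathcal{M}_\s$ of $\mu_k^\s H_\s'(u_k^\s) = I'(u_k^\s)$ with $I(u_k^\s)=c_k^\s$, where $c_k^\s$ is the sup-min characterization of Theorem \ref{prop-LS}. Testing this identity against $u_k^\s$ and using $\langle H_\s'(u),u\rangle = H_\s(u)$ and $\langle I'(u),u\rangle = I(u)$ yields $\mu_k^\s = c_k^\s$, and after setting $\lambda_k^\s := 1/\mu_k^\s$ one obtains exactly the weak Euler-Lagrange formulation of \eqref{Ps} in the sense of Theorem \ref{teoeulerf}, together with the variational formula \eqref{variational characterizations} by switching from the normalization $H_\s = 1$ to the normalization $I = 1$ via the $1$-homogeneity of both functionals (this rescaling turns the sup-min for $I$ on $\{H_\s=1\}$ into the inf-sup for $H_\s$ on $\{I=1\}$). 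The main point that is not purely formal is verifying \eqref{H32} in Lemma \ref{lemma1}, which already handled the fractional $S_+$-type property via monotonicity of $H_\s' = \LL_{\s,\p}$ and the uniform convexity of $W^{\s,\p}_0(\Omega)$; everything else adapts routinely.
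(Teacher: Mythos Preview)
Your proposal is correct and follows exactly the route the paper intends: the paper itself omits the proof, stating only that it is ``a slight variation of Theorem \ref{existence.eigenvalues}'', and your sketch is precisely that variation, verifying (H1)--(H4) for $(\phi,\psi)=(I,H_\s)$ on $X=W^{\s,\p}_0(\Omega)$ with the fractional Poincar\'e and Rellich--Kondrashov results and Lemma \ref{lemma1}\eqref{H32} in place of their local counterparts. You even make explicit the homogeneity rescaling from $\{H_\s=1\}$ to $\{I=1\}$ that the paper handles tacitly in the local case.
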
 

This eigenvalues related to the fractional problem will be denoted by $\Sigma_{LS}^{\s, \p}$.

\section*{Acknowledgements}
This work was partially supported by UBACYT Prog. 2018 20020170100445BA, by ANPCyT PICT 2016-1022 and by PIP No. 11220150100032CO.

J. Fern\'andez Bonder is a members of CONICET and I. Ceresa Dussel is a doctoral fellow of CONICET.
\bibliographystyle{plain}
\bibliography{References.bib}
\end{document}